\newtheorem{theorem}{Theorem}[section]
\newtheorem{maprop}[theorem]{Proposition}
\newtheorem{monlem}[theorem]{Lemma}
\newtheorem{corol}[theorem]{Corollary}
\newtheorem{maconj}[theorem]{Conjecture}
\theoremstyle{definition}
\newtheorem{defi}[theorem]{Definition}
\newtheorem{notation}[theorem]{Notation}
\newtheorem{monexmp}[theorem]{Example}
\newtheorem{rmq}[theorem]{Remark}
\def\Aaffine{\tilde{\mathbb A}}
\def\Daffine{\tilde{\mathbb D}}
\def\Eaffine{\tilde{\mathbb E}}
\def\k{{\mathbb C}}
\def\ker{{\rm{Ker}\,}}
\def\coker{{\rm{Coker}\,}}
\def\Hom{{\rm{Hom}}}
\def\Ext{{\rm{Ext}}}
\def\modg{{\textrm{-}\rm{mod}}}
\def\Gr{{\rm{Gr}}}
\def\End{{\rm{End}}}
\def\dim{{\rm{dim}\,}}
\def\ddim{{\textbf{dim}\,}}
\def\rep{{\rm{rep}}}
\def\Ob{{\rm{Ob}}}
\def\den{{\rm{den}}}
\def\im{{\rm{im}\,}}
\def\re{{\rm{re}\,}}
\def\Sc{{\rm{Sc}\,}}
\def\A{{\mathbb{A}}}
\def\N{{\mathbb{N}}}
\def\Z{{\mathbb{Z}}}
\def\C{{\mathbb{C}}}
\def\P{{\mathbb{P}}}
\def\CC{{\mathcal{C}}}
\def\<{\left<}
\def\>{\right>}
\def\d{{\partial}}
\def\ens#1{\left\{ #1 \right\}}
\def\fl{{\longrightarrow}\,}
\title{Generic variables in acyclic cluster algebras}
\author{G. Dupont}
\address{
	Universit\'e de Sherbrooke\\
	2500, Boul. de l'Universit\'e\\
	J1K 2R1, Sherbrooke, QC, Canada.
}
\email{gregoire.dupont@usherbrooke.ca}
\urladdr{
	http://pages.usherbrooke.ca/gdupont2
}
\begin{document}

\begin{abstract}
	Let $Q$ be an acyclic quiver. We introduce the notion of generic variables for the coefficient-free acyclic cluster algebra $\mathcal A(Q)$. We prove that the set $\mathcal G(Q)$ of generic variables contains naturally the set $\mathcal M(Q)$ of cluster monomials in $\mathcal A(Q)$ and that these two sets coincide if and only if $Q$ is a Dynkin quiver. We establish multiplicative properties of these generic variables analogous to multiplicative properties of Lusztig's dual semicanonical basis. This allows to compute explicitly the generic variables when $Q$ is a quiver of affine type. When $Q$ is the Kronecker quiver, the set $\mathcal G(Q)$ is a $\mathbb Z$-basis of $\mathcal A(Q)$ and this basis is compared to Sherman-Zelevinsky and Caldero-Zelevinsky bases.
\end{abstract}

\maketitle

\setcounter{tocdepth}{1}
\tableofcontents

\section{Introduction}
	Cluster algebras were introduced by Fomin and Zelevinsky in order to provide a combinatorial framework for studying total positivity in algebraic groups and canonical bases in quantum groups \cite{cluster1}. Since then, they were subjects to important developments in various areas of mathematics like combinatorics, Lie theory, Poisson geometry, Teichmüller theory and representation theory of quivers. Nevertheless, the problem of computing bases in arbitrary cluster algebras is still widely open. Sherman-Zelevinsky, Caldero-Zelevinsky and Cerulli provided some explicit constructions in particular cases \cite{shermanz,CZ,Cerulli:A21}. More generally, Geiss, Leclerc and Schr\"oer gave a construction of bases in a wide class of cluster algebras, including acyclic cluster algebras \cite{GLS:KMgroups}. Their construction, using representation theory of preprojective algebras, is rather theoretical and difficult to explicit in practice. Inspired by their works, we provide in this article a similar, but easier to explicit, construction using cluster categories and cluster characters. 

	Let $Q$ be a quiver, we denote by $Q_0$ its set of vertices and by $Q_1$ its set of arrows. We always assume that $Q_0$ and $Q_1$ are finite sets and that the underlying unoriented graph of $Q$ is connected. We fix an \emph{acyclic} quiver $Q$ (that is, without oriented cycles) and a family $\textbf u =(u_i|i\in Q_0)$ of indeterminates over $\Z$. We denote by $\mathcal A(Q,\textbf u)$, or simply $\mathcal A(Q)$, the (coefficient-free) cluster algebra with initial seed $(Q,\textbf u)$. According to the \emph{Laurent phenomenon}, it is a $\Z$-subalgebra of the algebra $\Z[\textbf u^{\pm 1}]$ of Laurent polynomials in the $u_i$'s \cite{cluster1}. The cluster algebra $\mathcal A(Q)$ is endowed with a distinguished set of generators, called \emph{cluster variables}, gathered into possibly overlapping sets of fixed cardinality called \emph{clusters}. Monomials in variables belonging to a same cluster are called \emph{cluster monomials}. We denote by $\mathcal M(Q)$ the set of all cluster monomials in $\mathcal A(Q)$. For every element $x$ in the ring $\Z[\textbf u^{\pm 1}]$, there exists an unique $Q_0$-tuple $\den(x)=(d_i)_{i \in Q_0} \in \Z^{Q_0}$, called \emph{denominator vector of $x$} such that 
	$$x=\frac{P(u_i|i\in Q_0)}{\prod_{i \in Q_0}u_i^{d_i}}$$
	where $P(u_i|i\in Q_0)$ is a polynomial not divisible by any $u_i$.

	Let $\k$ denote the field of complex numbers. Throughout the article, $H$ will denote the finite dimensional hereditary path algebra $\k Q$ of $Q$. We denote by $H$-mod the category of finitely generated left-$H$-modules. As usual, this category is identified with the category $\rep(Q)$ of finite dimensional representations of $Q$ over $\C$. We denote by $D^b(H\modg)$ the bounded derived category of $H$-mod with shift functor $[1]$ and Auslander-Reiten translation $\tau$. The \emph{cluster category} $\CC_Q$ of $Q$ was introduced in \cite{BMRRT} in order to categorify the cluster algebra $\mathcal A(Q)$ (see also \cite{CCS1} for Dynkin type $\A$). It is defined as the orbit category of the functor $\tau^{-1}[1]$ in $D^b(H\modg)$. It is a Krull-Schmidt triangulated category satisfying the 2-Calabi-Yau property, that is, 
	$$\Ext^1_{\CC_Q}(M,N) \simeq D \Ext^1_{\CC_Q}(N,M)$$
	for any two objects $M,N \in \CC_Q$ where $D=\Hom_{\k}(-,\k)$ \cite{K,BMRRT}. Moreover, the set of isoclasses of indecomposable objects in $\CC_Q$ can be identified with the disjoint union of the set of isoclasses of indecomposable $H$-modules and the set of isoclasses of shifts of indecomposable projective $H$-modules \cite{BMRRT}. If $M,N$ are $H$-modules, it is proved in \cite{BMRRT} that there is a bifunctorial isomorphism $\Ext^1_{\CC_Q}(M,N) \simeq \Ext^1_{H}(M,N) \oplus D\Ext^1_{H}(N,M)$. In particular, a $H$-module is rigid (that is, without self-extensions) in $H$-mod if and only if it is rigid in $\CC_Q$.

	The cluster category $\CC_Q$ is known to provide a fruitful framework for studying the cluster algebra $\mathcal A(Q)$ (see e.g. \cite{BMRRT,BMRT,CC,CK1,CK2}). In \cite{CC,CK2}, the authors introduced a map $X_?$ from the set $\Ob(\CC_Q)$ to the ring $\Z[\textbf u^{\pm 1}]$, called \emph{Caldero-Chapoton map}, as a certain generating series of Euler characteristics of grassmannians of submodules (see Section \ref{section:generic} for definitions). In \cite[Theorem 4]{CK2}, the authors proved that $X_?$ induces a 1-1 correspondence between the set of isoclasses of indecomposable rigid objects in $\CC_Q$ and the set of cluster variables in $\mathcal A(Q)$. Moreover, they proved that the set of clusters in $\mathcal A(Q)$ coincides with the set of sets $\ens{X_{T_i}|i \in Q_0}$ where $T=\bigoplus_{i\in Q_0}T_i$ runs over the isoclasses of cluster-tilting objects in $\CC_Q$. In particular, it follows that 
	$$\mathcal M(Q)=\ens{X_M|M \textrm{ is rigid in }\CC_Q}.$$
	
	When $Q$ is a quiver of Dynkin type, Caldero and Keller proved that $\mathcal M(Q)$ is a $\Z$-basis in the cluster algebra $\mathcal A(Q)$ \cite[Corollary 3]{CK1}. When $Q$ is not of Dynkin type, it was observed by Sherman and Zelevinsky that $\mathcal M(Q)$ does not span the cluster algebra $\mathcal A(Q)$. Nevertheless, it is known that $\mathcal M(Q)$ is still linearly independent over $\Z$ (see e.g. \cite{GLS:KMgroups}). Thus, it seems natural to try to ``complete'' $\mathcal M(Q)$ into a $\Z$-basis of $\mathcal A(Q)$.

	In this paper, we introduce a new set $\mathcal G(Q)$ of Laurent polynomials, called \emph{generic variables}, in the ring $\Z[\textbf u^{\pm 1}]$ of Laurent polynomials containing the cluster algebra $\mathcal A(Q)$. We prove that the denominator vector map induces a parametrization of the set $\mathcal G(Q)$ by the root lattice $\Z^{Q_0}$. We establish in Proposition \ref{prop:clustermonomials} that $\mathcal G(Q)$ naturally contains the set $\mathcal M(Q)$ of cluster monomials. Moreover, we prove that these two sets coincide if and only if $Q$ is a Dynkin quiver. We conjecture in Section \ref{section:conjectures} that these generic variables form a $\Z$-basis in the acyclic cluster algebra $\mathcal A(Q)$. 

	In Proposition \ref{prop:Kacdcp} and Lemma \ref{lem:multiplicativity}, we establish multiplicative properties for these cluster variables, notably with respect to Kac's canonical decomposition. These multiplicative properties allow to simplify considerably the problem of the computation of generic variables. In particular, when $Q$ is a quiver of affine type, Proposition \ref{prop:explicitbase} provides an explicit description of the set $\mathcal G(Q)$. We also prove that for an affine quiver $Q$, each character $X_M$ belong to the cluster algebra $\mathcal A(Q)$ so that $\mathcal A(Q)=\Z[X_M|M \in \Ob(\CC_Q)]$. It follows that, in this case, the set of generic variables actually lies in the cluster algebra $\mathcal A(Q)$.

	Finally, when $Q$ is the Kronecker quiver, we compare our construction with two already known constructions of bases in $\mathcal A(Q)$ due respectively to Sherman-Zelevinsky \cite{shermanz} and Caldero-Zelevinsky \cite{CZ}. In particular, it follows that, in this case, generic variables form a $\Z$-basis of the cluster algebra $\mathcal A(Q)$.

\begin{section}{Generic Variables}\label{section:generic}
	Before recalling the definition of the Caldero-Chapoton map, we need some terminology and notations. For any vertex $i \in Q_0$, we denote by $S_i$ the associated simple $H$-module and by $P_i$ its projective cover. The \emph{dimension vector} of a $H$-module $M$ is the element
	$$\ddim M=(\dim \Hom_{H}(P_i,M))_{i \in Q_0} \in \N^{Q_0}.$$
	We denote by $K_0(H\modg)$ the Grothendieck group of the category $H$-mod. The dimension vector map induces an isomorphism $K_0(H\modg) \simeq \Z^{Q_0}$ sending the isoclass of the simple $H$-module $S_i$ to the $i$-th vector $\alpha_i$ of the canonical basis of $\Z^{Q_0}$ for any $i \in Q_0$. 
	
	We denote by $\<-,-\>$ the homological Euler form on $H$-mod. It is defined by
	$$\<M,N\>=\dim \Hom_{H}(M,N)-\dim \Ext^1_{H}(M,N)$$
	for any two $H$-modules $M,N$. This induces a well-defined bilinear form on the Grothendieck group $K_0(H\modg)$.

	For any $H$-module $M$ and any $\textbf e \in \N^{Q_0}$, the \emph{grassmannian of submodules of $M$ of dimension $\textbf e$} is the set
	$$\Gr_{\textbf e}(M)=\ens{N \textrm{ submodule of }M | \ddim N = \textbf e}.$$
	This is a closed subset of the ordinary vector spaces grassmannian so that it is a projective variety whose Euler-Poincar\'e characteristic with respect to the simplicial cohomology is denoted by $\chi(\Gr_{\textbf e}(M))$. 

	\begin{defi}\cite{CC}
		Let $Q$ be an acyclic quiver. The \emph{Caldero-Chapoton map} is the map 
		$$X_? : \Ob(\CC_Q) \fl \Z[\textbf u^{\pm 1}]$$
		given by~:
		\begin{enumerate}
			\item[a.] If $M$ is an indecomposable $H$-module, 
				\begin{equation}\label{eq:XM}
					X_M=\sum_{\textbf e \in \N^{Q_0}} \chi(\Gr_{\textbf e}(M)) \prod_{i \in Q_0} u_i^{-\<\textbf e,\alpha_i\>-\<\alpha_i, \ddim M-\textbf e\>};
				\end{equation}
			\item[b.] if $M \simeq P_i[1]$ is the shift of an indecomposable projective module, 
				$$X_M=u_i;$$
			\item[c.] for any two objects $M,N$ in $\mathcal C_Q$, 
				$$X_MX_N=X_{M \oplus N}.$$
		\end{enumerate}
	\end{defi}
	It is easy to check that equation (\ref{eq:XM}) also holds for decomposable $H$-modules (see e.g. \cite{CC}). Note also that the Caldero-Chapoton map is invariant under isomorphisms.

	For any arrow $\alpha: i \fl j \in Q_1$, we denote by $s(\alpha)=i$ its \emph{source} and by $t(\alpha)=j$ its \emph{target}. For any dimension vector $\textbf d \in \N^{Q_0}$, we denote by $\rep(Q,\textbf d)$ the \emph{representation variety of dimension $\textbf d$}, that is, the set of representations of $Q$ with dimension vector $\textbf d$. It can be identified with the irreducible affine variety $\prod_{\alpha \in Q_1} k^{d_{s(\alpha)}} \times k^{d_{t(\alpha)}}$.
	
	For any dimension vector $\textbf d \in \N^{Q_0}$, the algebraic group 
	$$GL(\textbf d)=\prod_{i \in Q_0}GL(d_i,\C)$$
	acts algebraically on the variety $\rep(Q,\textbf d)$ by conjugation. The orbits of this action are in bijection with the isoclasses of representations of $Q$ of dimension vector $\textbf d$. In particular, two points $M,N$ in $\rep(Q,\textbf d)$ are in the same orbit if and only if they represent isomorphic representations.

	\begin{monlem}\label{lem:Ude}
		Fix $Q$ an acyclic quiver and $\textbf d \in \N^{Q_0}$. For every $\textbf e \in \N^{Q_0}$, there exists a $GL(\textbf d)$-invariant dense open subset $U_{\textbf d,\textbf e} \subset \rep(Q,\textbf d)$ such that the map 
		$$\phi_{\textbf d,\textbf e}:M \mapsto \chi(\Gr_{\textbf e}(M))$$ 
		is constant on $U_{\textbf d, \textbf e}$. Moreover, the value of $\phi_{\textbf d,\textbf e}$ on such a subset does not depend on the choice of $U_{\textbf d, \textbf e}$.
	\end{monlem}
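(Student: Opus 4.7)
The plan is to exhibit $\phi_{\textbf{d},\textbf{e}}$ as a constructible function on the irreducible variety $\rep(Q,\textbf{d})$ and then invoke generic constancy. To this end, I would introduce the universal quiver grassmannian
$$Z_{\textbf{d},\textbf{e}} = \ens{(M,N) \in \rep(Q,\textbf{d}) \times \prod_{i \in Q_0} \Gr(e_i, \k^{d_i}) \mid N \textrm{ is stable under every } M_\alpha,\, \alpha \in Q_1}$$
which is a closed subvariety cut out by the subrepresentation conditions. The first projection $\pi : Z_{\textbf{d},\textbf{e}} \fl \rep(Q,\textbf{d})$ is proper, and the fiber above $M$ is exactly $\Gr_{\textbf{e}}(M)$.

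Next, I would invoke the classical fact (valid over $\k=\C$) that the Euler characteristic of fibers of a proper morphism of complex algebraic varieties is a constructible function on the base; this follows from the existence of a finite stratification of $\rep(Q,\textbf{d})$ by locally closed subvarieties over which $\pi$ restricts to a topologically locally trivial fibration, for instance via Thom's isotopy lemma. Consequently $\phi_{\textbf{d},\textbf{e}}$ is constructible. Since $\rep(Q,\textbf{d}) \simeq \prod_{\alpha \in Q_1} \Hom(\k^{d_{s(\alpha)}}, \k^{d_{t(\alpha)}})$ is an irreducible affine space, any constructible function on it takes some value $c$ on a dense open subset $V$.

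To upgrade $V$ to a $GL(\textbf{d})$-invariant set, I would observe that $\phi_{\textbf{d},\textbf{e}}$ is itself $GL(\textbf{d})$-invariant, since the action of $g \in GL(\textbf{d})$ on $M$ yields an isomorphic representation and hence a bijection between the corresponding grassmannians of subrepresentations of dimension $\textbf{e}$. Setting
$$U_{\textbf{d},\textbf{e}} = \bigcup_{g \in GL(\textbf{d})} g \cdot V,$$
we obtain a dense open, $GL(\textbf{d})$-stable subset on which $\phi_{\textbf{d},\textbf{e}}$ is identically equal to $c$. Independence from the choice of $U_{\textbf{d},\textbf{e}}$ is immediate: any two dense open subsets of the irreducible variety $\rep(Q,\textbf{d})$ meet, forcing the two constants to coincide.

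The main obstacle is to justify the constructibility statement for $\phi_{\textbf{d},\textbf{e}}$. Intuitively clear, it rests on a nontrivial result on the topology of algebraic morphisms (Thom's isotopy lemma, or Verdier's stratification theorem); the remainder of the argument is formal from irreducibility of the affine space $\rep(Q,\textbf{d})$ together with isomorphism-invariance of the quiver grassmannians entering the Caldero-Chapoton construction.
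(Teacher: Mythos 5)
Your argument is correct and follows the same overall structure as the paper's proof: establish that $\phi_{\textbf d,\textbf e}$ is a $GL(\textbf d)$-invariant constructible function, then use irreducibility of $\rep(Q,\textbf d)$ to get the dense open subset and the independence of value, and use $GL$-invariance of the function to obtain a $GL$-invariant such subset. The one difference is that the paper simply cites Xu's Proposition~1.3 for constructibility and $GL$-invariance, whereas you unpack that reference by constructing the universal quiver grassmannian, noting the projection is proper, and invoking Verdier/Thom stratification to get constructibility of Euler characteristics of fibers; this is precisely the content behind the citation, so your version is a more self-contained variant of the same proof.
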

	\begin{proof}
		It follows from \cite[Proposition 1.3]{Xu} that $\phi_{\textbf d,\textbf e}$ is a $GL(\textbf d)$-invariant constructible function on $\rep(Q,\textbf d)$. Since $\rep(Q,\textbf d)$ is an irreducible variety there exists a dense open subset $U_{\textbf d,\textbf e}$ in $\rep(Q,\textbf d)$ on which this map is constant. By irreducibility, any two non-empty sets intersect so that the value of $\phi_{\textbf d,\textbf e}$ does not depend on the choice of $U_{\textbf d,\textbf e}$. Moreover, since $\phi_{\textbf d,\textbf e}$ is $GL(\textbf d)$-invariant, we can assume that $U_{\textbf d,\textbf e}$ is $GL(\textbf d)$-invariant.
	\end{proof}

	\begin{notation}
		Fix $\textbf d, \textbf e \in \N^{Q_0}$, we use the following notations~:
		$$\textbf e \leq \textbf d \Leftrightarrow e_i \leq d_i \textrm{ for every }i \in Q_0~;$$
		$$\textbf e \lneqq \textbf d \Leftrightarrow \textbf e \leq \textbf d \textrm{ and } \textbf e \neq \textbf d.$$
	\end{notation}

	\begin{corol}\label{corol:Ud}
		Fix $Q$ an acyclic quiver and $\textbf d \in \N^{Q_0}$. For any $\textbf e \in \N^{Q_0}$, let $U_{\textbf d,\textbf e} \subset \rep(Q,\textbf d)$ be an open dense subset provided by Lemma \ref{lem:Ude} and let $U_{\textbf d}=\bigcap_{\textbf e \leq \textbf d} U_{\textbf d, \textbf e}$. Then, $U_{\textbf d}$ is a $GL(\textbf d)$-invariant open subset of $\rep(Q,\textbf d)$ on which $X_?$ is constant. Moreover, if $U'_{\textbf d}$ is another non-empty open subset of $\rep(Q,\textbf d)$ on which $X_?$ is constant, then the values of $X_?$ on $U_{\textbf d}$ and $U'_{\textbf d}$ coincide.
	\end{corol}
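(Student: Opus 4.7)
The plan is to leverage the fact that the intersection defining $U_{\textbf d}$ is \emph{finite}, since the set $\ens{\textbf e \in \N^{Q_0} : \textbf e \leq \textbf d}$ is finite, and to observe that this finite piece of data already controls all nonzero terms in the Caldero-Chapoton sum on $\rep(Q,\textbf d)$. First I would establish the basic topological properties of $U_{\textbf d}$: as a finite intersection of $GL(\textbf d)$-invariant dense open subsets of the irreducible variety $\rep(Q,\textbf d)$, it is $GL(\textbf d)$-invariant, open, and non-empty (in fact dense), because in an irreducible variety any two non-empty open subsets meet, so finitely many non-empty dense open sets have dense open intersection, and invariance is obviously preserved under intersection.

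Next I would show that $X_?$ is constant on $U_{\textbf d}$. The key observation is that for every $M \in \rep(Q,\textbf d)$ and every $\textbf e \in \N^{Q_0}$ with $\textbf e \not\leq \textbf d$, the grassmannian $\Gr_{\textbf e}(M)$ is empty, since the dimension vector of any submodule of $M$ is componentwise bounded by $\textbf d$. Hence only the terms indexed by $\textbf e \leq \textbf d$ contribute nontrivially to the sum (\ref{eq:XM}). On $U_{\textbf d}$ each $\phi_{\textbf d,\textbf e}$ with $\textbf e \leq \textbf d$ is constant by construction, while the exponents $-\<\textbf e,\alpha_i\>-\<\alpha_i,\ddim M - \textbf e\>$ depend only on $\textbf e$ and on the fixed dimension vector $\textbf d$. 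Combining these facts term by term yields the constancy of $X_M$ on $U_{\textbf d}$.

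For the final assertion I would invoke irreducibility once more: if $U'_{\textbf d}$ is any non-empty open subset of $\rep(Q,\textbf d)$ on which $X_?$ is constant, then $U'_{\textbf d}$ is automatically dense, so $U_{\textbf d} \cap U'_{\textbf d} \neq \emptyset$; evaluating at any point of this intersection forces the constant value of $X_?$ on $U'_{\textbf d}$ to coincide with the constant value on $U_{\textbf d}$.

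There is no real obstacle; the whole statement is a direct consequence of Lemma \ref{lem:Ude} together with irreducibility of $\rep(Q,\textbf d)$. The only subtlety worth highlighting explicitly is the truncation step: although the full Caldero-Chapoton sum is indexed by all of $\N^{Q_0}$, intersecting only the $U_{\textbf d,\textbf e}$ with $\textbf e \leq \textbf d$ is harmless because the omitted $\phi_{\textbf d,\textbf e}$ vanish identically on $\rep(Q,\textbf d)$.
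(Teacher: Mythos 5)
Your argument is correct and matches the paper's proof essentially step for step: finite intersection of invariant dense opens, constancy of each $\phi_{\textbf d,\textbf e}$ on $U_{\textbf d}$ forcing constancy of the sum, and irreducibility for the uniqueness assertion. You are slightly more explicit than the paper in spelling out why only $\textbf e \leq \textbf d$ contributes (emptiness of $\Gr_{\textbf e}(M)$ otherwise), but this is the same reasoning the paper uses implicitly when it writes the sum over $\textbf e \leq \textbf d$.
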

	\begin{proof}
		$U_{\textbf d}=\bigcap_{\textbf e \leq \textbf d} U_{\textbf d, \textbf e}$ is a finite intersection of $GL(\textbf d)$-invariant dense open subsets, it is thus a $GL(\textbf d)$-invariant open dense subset in $\rep(Q,\textbf d)$. If $\textbf e \leq \textbf d$, we denote by $n_{\textbf e}$ the value of $\phi_{\textbf d,\textbf e}$ on $U_{\textbf d, \textbf e}$. For any $M \in U_{\textbf d}$, we thus have
		\begin{align*}
			X_M 
				&= \sum_{\textbf e \leq \textbf d} \chi(\Gr_{\textbf e}(M)) \prod_i u_i^{-\<\textbf e, \alpha_i\>-\<\alpha_i, \textbf d - \textbf e\>}\\
				&=\sum_{\textbf e \leq \textbf d} n_{\textbf e} \prod_i u_i^{-\<\textbf e, \alpha_i\>-\<\alpha_i, \textbf d - \textbf e\>}\\
		\end{align*}
		so that $X_?$ is constant over $U_{\textbf d}$. The second assertion follows from the irreducibility of $\rep(Q,\textbf d)$.
	\end{proof}
	
	Fix an object $M$ in $\CC_Q$, then $M$ admits a unique (up to isomorphism) decomposition $M=H^0(M) \oplus P_M[1]$ where $H^0(M)$ is a $H$-module and $P_M$ is a projective $H$-module. 

	The dimension vector map on the cluster category is the additive map $\textbf{dim}_{\CC_Q} : \Ob(\CC_Q) \fl \Z^{Q_0}$ defined by $\ddim_{\CC_Q}(P_i[1])=-\alpha_i$ for any $i \in Q_0$ and $\ddim_{\CC_Q}(M)=\ddim M$ for any indecomposable $H$-module $M$. Since $\textbf{dim}_{\CC_Q}(M)=\ddim M$ for any $H$-module $M$, we abuse the notations and simply write $\ddim M$ for an arbitrary object in the cluster category.

	\begin{notation}
		If $\textbf d \in \Z^{Q_0}$, we set
		$$[\textbf d]_+=\left(\max(d_i,0)\right)_{i \in Q_0}, \, [\textbf d]_-=\left(\min(d_i,0)\right)_{i \in Q_0}$$
		and
		$$P_{\textbf d}[1]=\bigoplus_{d_i<0} P_i[1]^{\oplus (-d_i)}.$$
	\end{notation}
	
	\begin{rmq}
		If $M$ is a rigid object in $\CC_Q$, then
		\begin{equation}\label{eq:rigiddimension}
			\ddim H^0(M)= [\ddim M]_+ \textrm{ and } \ddim P_M[1]=[\ddim M]_-.
		\end{equation}
		Indeed, if $P_i[1]$ is a direct summand of $M$ then $(\ddim H^0(M))_i = \dim \Hom_{H}(P_i,H^0(M)) \leq \dim \Hom_{\CC_Q}(P_i,H^0(M)) = \dim \Ext^1_{\CC_Q}(P_i[1],H^0(M)) = 0$. Thus, if $(\ddim H^0(M))_i>0$, we get $(\ddim M)_i=(\ddim H^0(M))_i$ and 
		$$[\ddim M]_+=\ddim H^0(M), \, [\ddim M]_-=\ddim P_M[1].$$ 
	
		Note nevertheless that equality (\ref{eq:rigiddimension}) does not hold in general if $M$ is not rigid. For example, consider the quiver $Q$ of Dynkin type $\mathbb A_1$, $S$ the unique simple (necessarily projective) $H$-module and $M=S \oplus S[1]$. Then $\ddim M=0$ whereas $H^0(M)=S$ and $P_M[1]=S[1]$ have respectively dimension vectors 1 and -1.
	\end{rmq}

	\begin{defi}
		Fix $\textbf d \in \N^{Q_0}$, we denote by $X_{\textbf d}$ the value of the Caldero-Chapoton map on $U_{\textbf d}$. Fix now $\textbf d \in \Z^{Q_0}$, we set
		$$X_{\textbf d} = X_{[\textbf d]_+}X_{P_{\textbf d}[1]}  = X_{[\textbf d]_+}.\prod_{d_i<0} u_i^{-d_i}$$
		and $X_{\textbf d}$ is called the \emph{generic variable of dimension $\textbf d$}.
		
		We denote by 
		$$\mathcal G(Q)=\ens{X_{\textbf d}|\textbf d \in \Z^{Q_0}}$$
		the set of all generic variables associated to the quiver $Q$.
	\end{defi}

	\begin{rmq}\label{rmq:denXd}
		According to \cite[Theorem 3]{CK2}, it is known that $\den(X_M)=\ddim M$ for any object $M$ in the cluster category $\CC_Q$. In particular, for every $\textbf d \in \Z^{Q_0}$, we have $\den(X_{\textbf d})=\textbf d$ so that generic variables are naturally parametrized by $\Z^{Q_0}$.
	\end{rmq}

	\begin{rmq}
		We set $\im(X_?)=\Z[X_M|M \in \Ob(\CC_Q)]$. It follows from \cite[Theorem 4]{CK2} that 
		$$\mathcal A(Q)=\Z[X_M|M \textrm{ is rigid in}\CC_Q] \subset \im(X_?).$$
		Nevertheless, it is not known in general if the inclusion is proper or not. Thus, one should take care that it is not clear from the definition that $\mathcal G(Q) \subset \mathcal A(Q)$. We will prove in Theorem \ref{theorem:imCC} that, when $Q$ is a quiver of affine type, $\im(X_?)=\mathcal A(Q)$ so that $\mathcal G(Q) \subset \mathcal A(Q)$.
	\end{rmq}
\end{section}

\begin{section}{Multiplicative properties}
	In this section, we study interactions between generic variables and classical results of Kac concerning generic representations of quivers. We fix an acyclic quiver $Q$ and we denote by $\mathfrak g_Q$ the corresponding affine Kac-Moody-Lie algebra. We only need basic results on root systems of affine Kac-Moody-Lie algebras. For the readers unfamiliar with this theory, we provide a simple combinatorial description of the root system of $Q$.

	We denote by $(-,-)$ the \emph{Tits form} of $Q$, that is, the $\Z$-bilinear form on $\Z^{Q_0}$ given by 
	$$(\textbf e,\textbf f)=\sum_{i \in Q_0} e_if_i - \sum_{\alpha \in Q_1}e_{s(\alpha)}f_{t(\alpha)}$$
	for any $\textbf e, \textbf f \in \Z^{Q_0}$. It is well-known that $(\textbf e,\textbf e)=\<\textbf e,\textbf e\>$ for any $\textbf e \in \Z^{Q_0}$. 
	We denote by
	$$\Phi_{>0}(Q)=\ens{\textbf e \in \N^{Q_0}|(\textbf e,\textbf e) \leq 1}$$
	the set of \emph{positive roots} of $Q$. A positive root is called \emph{real} if $(\textbf e,\textbf e) = 1$ and is called \emph{imaginary} otherwise. We denote by $\Phi_{>0}^{\re}(Q)$ the set of positive real roots of $Q$ and by $\Phi^{\im}_{>0}(Q)$ the set of positive imaginary roots of $Q$. The \emph{root lattice} of $Q$ is $\Z^{Q_0}$ and the elements $\alpha_i, i \in Q_0$ are called the \emph{simple roots} of $Q$.
	
	According to Kac's theorem, if $\textbf d \in \N^{Q_0}$, there exists an indecomposable representation in $\rep(Q,\textbf d)$ if and only if $\textbf d$ is a positive root of $Q$. Moreover, this representation is unique (up to isomorphism) if and only if $\textbf d$ is a positive real root. We say that a (necessarily positive) root is a \emph{Schur root} if there exists a (necessarily indecomposable) representation $M \in \rep(Q,\textbf d)$ such that $\End_{H}(M) \simeq \C$. Such a representation is called a \emph{Schur} representation. The set of Schur roots is denoted by $\Phi^{\Sc}(Q)$.

	Let $\textbf d \in \N^{Q_0}$. According to \cite{Kac:infroot1}, there exists a dense open subset $\mathfrak M_{\textbf d} \subset \rep(Q,\textbf d)$ and a family $\ens{\textbf e_1, \ldots, \textbf e_n}$ of Schur roots such that every representation $M \in \mathfrak M_{\textbf d}$ decomposes into  $M=\bigoplus_{i=1}^n M_i$ where each $M_i$ is an indecomposable Schur representation of dimension $\textbf e_i$. In particular $\textbf d=\sum_{i=1}^n \textbf e_i$ and the family $\ens{\textbf e_1, \ldots, \textbf e_n}$ is uniquely determined. This decomposition of $\textbf d$ is called the \emph{canonical decomposition of $\textbf d$} and is denoted by $\textbf d = \textbf e_1 \oplus \cdots \oplus \textbf e_n$.
	
	The canonical decomposition can be characterized as follows~:
	\begin{maprop}[\cite{Kac:infroot2}]\label{prop:Kacdcp}
		Fix $Q$ an acyclic quiver and $\textbf d \in \N^{Q_0}$. Then $\textbf d=\bigoplus_{i=1}^n \textbf e_i$ if and only if for every $i \in \ens{1, \ldots, n}$, there exists a Schur representation $M_i \in \rep(Q,\textbf e_i)$ such that $\Ext^1_{kQ}(M_k,M_l)=0$ for any $k \neq l$ in $\ens{1, \ldots, n}$. 
	\end{maprop}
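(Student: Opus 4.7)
The plan is to handle both directions by a single dimension count on the parametrization morphism
$$\mu: GL(\textbf d) \times \prod_{i=1}^n \rep(Q,\textbf e_i) \fl \rep(Q,\textbf d), \quad (g,(N_i)) \longmapsto g \cdot \bigoplus_i N_i,$$
in the spirit of the original arguments of Kac and Schofield. Both source and target are irreducible, so the existence of a dense open subset of $\rep(Q,\textbf d)$ consisting of representations that decompose as direct sums with summand dimensions $(\textbf e_i)$ is equivalent to the dominance of $\mu$.

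First I would carry out the fiber computation. Fix a tuple of Schur representations $M_i\in\rep(Q,\textbf e_i)$, set $M=\bigoplus_i M_i$, and apply the Krull--Schmidt theorem to the Schur indecomposable summands to identify $\mu^{-1}(gM)$, up to an $\End_H(M)^\times$-torsor, with the product of the $GL(\textbf e_i)$-orbits of the $M_i$. Since $\dim \End_H(M_i)=1$, this yields
$$\dim \mu^{-1}(gM)=\sum_i \dim GL(\textbf e_i)+\sum_{k\ne l}\dim \Hom_H(M_k,M_l).$$
Combining with $\dim \rep(Q,\textbf e_i)-\dim GL(\textbf e_i)=-(\textbf e_i,\textbf e_i)$, expanding $(\textbf d,\textbf d)=\sum_{k,l}(\textbf e_k,\textbf e_l)$, and using $(\textbf e_k,\textbf e_l)=\dim \Hom_H(M_k,M_l)-\dim \Ext^1_H(M_k,M_l)$, one finds that $\mu$ is dominant if and only if for a generic $(M_i)\in\prod_i\rep(Q,\textbf e_i)$ one has $\Ext^1_H(M_k,M_l)=0$ for every $k\ne l$.

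From this characterization, both implications are short. For $(\Rightarrow)$, the hypothesis $\textbf d=\bigoplus \textbf e_i$ gives $\mathfrak M_{\textbf d}\subset \im(\mu)$, so $\mu$ is dominant and the dimension count produces a dense open locus of tuples $(M_i)$ with $\Ext^1_H(M_k,M_l)=0$ for $k\ne l$; by genericity these $M_i$ are indecomposable Schur. For $(\Leftarrow)$, given one such tuple, upper semi-continuity of $\dim \Ext^1_H(-,-)$ on $\prod_i \rep(Q,\textbf e_i)$ propagates the vanishing to a dense open subset, so $\mu$ is again dominant; its image then contains a dense open subset of $\rep(Q,\textbf d)$ whose points decompose as $\bigoplus N_i$ with $N_i\in\rep(Q,\textbf e_i)$ indecomposable Schur, and uniqueness of the canonical decomposition forces $\textbf d=\bigoplus_i \textbf e_i$.

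The main obstacle I foresee is the fiber computation when some $\textbf e_i$ coincide: Krull--Schmidt then determines the summands only up to the symmetric group permuting repeated factors, contributing a finite factor that is irrelevant for the codimension but must be accounted for cleanly. A secondary point is verifying that a generic representation in $\rep(Q,\textbf e_i)$ is indeed indecomposable Schur whenever $\textbf e_i$ is a Schur root, which follows from Kac's theorem together with the openness of the Schur condition.
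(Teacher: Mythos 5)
Note first that the paper supplies no proof of this proposition: it is stated as a recalled result of Kac, with the citation to \cite{Kac:infroot2}, so there is no internal argument to compare against. Your blind proof is correct, and it is essentially the classical dimension-count argument going back to Kac, also found in Schofield's work and in Crawley-Boevey--Schr\"oer (the latter being what Dupont actually cites when he uses this proposition in the proof of Proposition~\ref{prop:dcpcanonique}). The fiber computation $\dim\mu^{-1}(gM)=\sum_i\dim GL(\textbf e_i)+\sum_{k\neq l}\dim\Hom_H(M_k,M_l)$ is right (the torsor is under $\mathrm{Aut}_H(M)=\End_H(M)^\times$, whose dimension is $\dim\End_H(M)=n+\sum_{k\ne l}\dim\Hom_H(M_k,M_l)$, and the $n$ cancels against $\sum_i\dim\End_H(M_i)$), and so is the resulting equivalence of dominance with $\sum_{k\neq l}\dim\Ext^1_H(M_k,M_l)=0$. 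One notational caution: the identity $(\textbf e_k,\textbf e_l)=\dim\Hom_H(M_k,M_l)-\dim\Ext^1_H(M_k,M_l)$ you invoke off the diagonal holds only because this paper's ``Tits form'' is defined to be the unsymmetrized Euler form $\<-,-\>$; with the more usual symmetric convention you would carry out the expansion of $(\textbf d,\textbf d)$ directly in terms of $\<-,-\>$. The two caveats you flag at the end are the right ones and are resolved as you say: when some $\textbf e_i$ coincide, Krull--Schmidt determines the summands up to a finite permutation, which multiplies the relevant fiber locus by a finite set and does not change codimension; and generic Schur indecomposability on $\rep(Q,\textbf e_i)$, when $\textbf e_i$ is a Schur root, follows from upper semi-continuity of $\dim\End_H$ together with the fact that Schur implies indecomposable, which is needed in both directions to ensure the fiber formula applies on a dense open stratum of tuples.
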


	Following \cite{Schofield:generalrepresentations}, we set~:
	\begin{defi}
		For any $\textbf d, \textbf d' \in \N^{Q_0}$, we say that $\Ext^1_{kQ}(\textbf d, \textbf d')$ \emph{vanishes generally} if there exists $M \in \rep(Q,\textbf d)$, $M' \in \rep(Q,\textbf d')$ such that 
		$$\Ext^1_{kQ}(M,M')=0$$
		in which case we write $\Ext^1_{kQ}(\textbf d, \textbf d')=0$.
	\end{defi}

	For any dimension vector $\textbf d \in \N^{Q_0}$, one has $U_{\textbf d} \cap \mathfrak M_{\textbf d} \neq \emptyset$, thus $X_{\textbf d}=X_M$ for some representation $M \in \mathfrak M_{\textbf d}$. One can thus use the canonical decomposition to compute generic variables as stated in the following proposition.
	
	\begin{maprop}\label{prop:dcpcanonique}
		Fix an acyclic quiver $Q$ and a dimension vector $\textbf d \in \N^{Q_0}$ with canonical decomposition $\textbf d= \textbf e_1 \oplus \cdots \oplus \textbf e_n$. Then
		$$X_{\textbf d}=\prod_{i=1}^n X_{\textbf e_i}.$$
	\end{maprop}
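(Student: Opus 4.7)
The plan is to exhibit a single tuple of representations $(M_1,\dots,M_n)$ with $M_i\in \rep(Q,\textbf e_i)$ such that two things hold simultaneously: each $M_i$ sits in the open set $U_{\textbf e_i}$ of Corollary \ref{corol:Ud} (so that $X_{M_i}=X_{\textbf e_i}$), and the direct sum $\bigoplus_i M_i$ sits in $U_{\textbf d}$ (so that $X_{\bigoplus_i M_i}=X_{\textbf d}$). Once such a tuple is found, the multiplicativity axiom (c) of the Caldero-Chapoton map gives
\[
X_{\textbf d}=X_{\bigoplus_i M_i}=\prod_{i=1}^n X_{M_i}=\prod_{i=1}^n X_{\textbf e_i},
\]
which is the required equality.

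To produce such a tuple, consider the direct sum morphism
\[
\sigma: \prod_{i=1}^n \rep(Q,\textbf e_i)\fl \rep(Q,\textbf d),\qquad (M_1,\dots,M_n)\mapsto \bigoplus_{i=1}^n M_i.
\]
First, $\sigma^{-1}(U_{\textbf d})$ is a \emph{nonempty} open subset of the irreducible variety $\prod_i \rep(Q,\textbf e_i)$: indeed, the remark preceding the proposition observes that $U_{\textbf d}\cap \mathfrak M_{\textbf d}\neq\emptyset$, and for any $M$ in this intersection, Kac's canonical decomposition gives an isomorphism $M\simeq \bigoplus_i M_i$ with $M_i\in \rep(Q,\textbf e_i)$, so that $\sigma(M_1,\dots,M_n)$ lies in the same $GL(\textbf d)$-orbit as $M$; since $U_{\textbf d}$ is $GL(\textbf d)$-invariant by Corollary \ref{corol:Ud}, we conclude $\sigma(M_1,\dots,M_n)\in U_{\textbf d}$. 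Second, each $U_{\textbf e_i}$ is open dense in $\rep(Q,\textbf e_i)$, so $\prod_i U_{\textbf e_i}$ is open dense in $\prod_i \rep(Q,\textbf e_i)$. The intersection of the two open dense sets $\sigma^{-1}(U_{\textbf d})$ and $\prod_i U_{\textbf e_i}$ in the irreducible variety $\prod_i \rep(Q,\textbf e_i)$ is therefore nonempty, and any $(M_1,\dots,M_n)$ in it has the two required properties.

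I expect the only subtlety to be the density/openness argument above, specifically the justification that $\sigma^{-1}(U_{\textbf d})$ is nonempty; everything else is a formal consequence of the definitions of $X_{\textbf d}$, of the multiplicativity of the Caldero-Chapoton map, and of the irreducibility of the representation varieties involved.
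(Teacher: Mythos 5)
Your proof is correct and reaches the same intermediate goal as the paper — namely, finding a tuple $(M_1,\dots,M_n)$ with $M_i\in U_{\textbf e_i}$ whose direct sum lies in $U_{\textbf d}$ — but you get there by a genuinely different argument for the crucial nonemptiness step. The paper cites Proposition~\ref{prop:Kacdcp} to deduce $\Ext^1_{kQ}(\textbf e_i,\textbf e_j)=0$ for $i\neq j$ and then invokes Crawley-Boevey–Schr\"oer \cite{CBS} to conclude that the direct-sum morphism is dominant, so that the image of the dense open $\mathcal U=\prod_i(\mathfrak M_{\textbf e_i}\cap U_{\textbf e_i})$ must meet $U_{\textbf d}$. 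You instead argue in the opposite direction: start with $M\in U_{\textbf d}\cap\mathfrak M_{\textbf d}$, decompose it by Kac's theorem, and use the $GL(\textbf d)$-invariance of $U_{\textbf d}$ from Corollary~\ref{corol:Ud} to see that the resulting tuple lies in the preimage $\sigma^{-1}(U_{\textbf d})$, which is therefore a nonempty open — hence dense — subset of the irreducible product $\prod_i\rep(Q,\textbf e_i)$. This is a nice simplification: it replaces the external dominance result of \cite{CBS} and the Ext-vanishing characterization with ingredients already established inside the paper (invariance and irreducibility), at the mild cost of making the argument one step less symmetric (you pull back $U_{\textbf d}$ rather than push forward $\mathcal U$). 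Both proofs are valid; yours is a little more self-contained.
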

	\begin{proof}
		Consider the injective algebraic morphism :
		$$\phi : \left\{ \begin{array}{rcl}
			\rep(Q,\textbf e_1) \times \cdots \times \rep(Q,\textbf e_n) & \fl & \rep(Q,\textbf d)\\
			(M_1, \ldots, M_n) & \mapsto & \bigoplus_{i=1}^n M_i
		\end{array}\right.$$

		According to Proposition \ref{prop:Kacdcp}, as $\textbf d=\textbf e_1 \oplus \cdots \oplus \textbf e_n$ is the canonical decomposition of $\textbf d$, one has $\Ext^1_{kQ}(\textbf e_i, \textbf e_j)=0$ for every $i \neq j$. It thus follows from \cite{CBS} that $\phi$ is a dominant morphism.
		
		We set
		$$\mathcal U=(\mathfrak M_{\textbf e_1} \cap U_{\textbf e_1})\times \cdots \times (\mathfrak M_{\textbf e_n} \cap U_{\textbf e_n}).$$
		This is a dense open subset in $\rep(Q,\textbf e_1) \times \cdots \times \rep(Q,\textbf e_n)$ and thus $\phi(\mathcal U)$ is a dense open subset in $\rep(Q,\textbf d)$. In particular $\phi(\mathcal U) \cap U_{\textbf d} \neq \emptyset$ and we can choose some $M \in \phi(\mathcal U) \cap U_{\textbf d}$. We thus have $X_{\textbf d}=X_M$ and $M$ decomposes into a direct sum 
		$$M=\bigoplus_{i=1}^n M_i$$
		with $M_i \in \mathfrak M_{\textbf e_i} \cap U_{\textbf e_i}$. It follows that 
		$$X_\textbf d = X_M = X_{\bigoplus_{i=1}^n M_i} = \prod_{i=1}^n X_{M_i} = \prod_{i=1}^n X_{\textbf e_i}.$$
	\end{proof}
	
	We extend the notion of general vanishing of the Ext-spaces to the cluster category $\CC_Q$. This notion will be useful for proving a more general multiplicative property of generic variables.
	\begin{defi}
		Let $\textbf d, \textbf d'$ be two elements in $\Z^{Q_0}$, we say that $\Ext^1_{\mathcal C_Q}(\textbf d, \textbf d')$ \emph{vanishes generally} if there exists $M \in \rep(Q, [\textbf d]_+)$, $M' \in \rep(Q,[\textbf d']_+)$ such that
		$$\Ext^1_{\mathcal C_Q}\left( M \oplus P_{\textbf d}[1], M' \oplus P_{\textbf d'}[1]\right)=0$$
		in which case we write $\Ext^1_{\mathcal C_Q}(\textbf d, \textbf d')=0$.
	\end{defi}
	Note in particular that for $\textbf d, \textbf d' \in \N^{Q_0}$, if $\Ext^1_{\mathcal C_Q}(\textbf d, \textbf d')=0$ then, $\Ext^1_{kQ}(\textbf d, \textbf d')=0$ and $\Ext^1_{kQ}(\textbf d', \textbf d)=0$.

	We now prove an analogue of Geiss-Leclerc-Schr\"oer multiplicative property for the dual semicanonical basis \cite{GLS}~:
	\begin{monlem}\label{lem:multiplicativity}
		Let $Q$ be an acyclic quiver. Fix $\textbf d, \textbf d' \in \Z^{Q_0}$ and assume that $\Ext^1_{\mathcal C_Q}(\textbf d, \textbf d')=0$. Then, 
		$$X_{\textbf d}X_{\textbf d'}=X_{\textbf d+\textbf d'}.$$
	\end{monlem}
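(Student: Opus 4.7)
My plan is to reduce the statement to the module-only case (handled by the argument of Proposition \ref{prop:dcpcanonique}) after a structural observation that rules out sign cancellations. Fix $M\in\rep(Q,[\mathbf d]_+)$ and $M'\in\rep(Q,[\mathbf d']_+)$ witnessing the hypothesis, and set $P=P_{\mathbf d}[1]$, $P'=P_{\mathbf d'}[1]$, so that, for generic $M$, one has $X_{\mathbf d}=X_MX_P$ and similarly for $\mathbf d'$.

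The first step is to unpack $\Ext^1_{\mathcal C_Q}(M\oplus P,\,M'\oplus P')=0$ into its four pieces. For the mixed terms, a direct computation in the orbit category yields $\Ext^1_{\mathcal C_Q}(P_i[1],N)\simeq\Hom_H(P_i,N)$, of dimension $(\ddim N)_i$, for any $H$-module $N$. Using $2$-Calabi--Yau duality, $\Ext^1_{\mathcal C_Q}(M,P')=0$ then forces $(\ddim M)_i=0$ whenever $d'_i<0$, and symmetrically $\Ext^1_{\mathcal C_Q}(P,M')=0$ forces $(\ddim M')_i=0$ whenever $d_i<0$. Hence no index $i$ can have $d_i$ and $d'_i$ of strictly opposite signs, and therefore
\[
[\mathbf d+\mathbf d']_+=[\mathbf d]_++[\mathbf d']_+\qquad\text{and}\qquad P_{\mathbf d+\mathbf d'}[1]=P\oplus P'.
\]

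The second step handles the module part. The BMRRT formula converts $\Ext^1_{\mathcal C_Q}(M,M')=0$ into $\Ext^1_{kQ}([\mathbf d]_+,[\mathbf d']_+)=0=\Ext^1_{kQ}([\mathbf d']_+,[\mathbf d]_+)$. Reprising the proof of Proposition \ref{prop:dcpcanonique}, the direct-sum morphism
\[
\rep(Q,[\mathbf d]_+)\times\rep(Q,[\mathbf d']_+)\fl\rep(Q,[\mathbf d]_++[\mathbf d']_+)
\]
is dominant by Crawley-Boevey--Schr\"oer, so one can choose $(A,A')\in U_{[\mathbf d]_+}\times U_{[\mathbf d']_+}$ with $A\oplus A'\in U_{[\mathbf d]_++[\mathbf d']_+}$. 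This yields $X_{[\mathbf d]_++[\mathbf d']_+}=X_{A\oplus A'}=X_{[\mathbf d]_+}X_{[\mathbf d']_+}$ by the multiplicativity axiom for $X_?$.

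Assembling both steps with the trivial identity $X_{P\oplus P'}=X_PX_{P'}$ then gives
\[
X_{\mathbf d+\mathbf d'}=X_{[\mathbf d+\mathbf d']_+}\cdot X_{P_{\mathbf d+\mathbf d'}[1]}=\bigl(X_{[\mathbf d]_+}X_{[\mathbf d']_+}\bigr)\bigl(X_PX_{P'}\bigr)=X_{\mathbf d}X_{\mathbf d'},
\]
which is the desired equality. The main obstacle I anticipate is the orbit-category computation in the first step establishing the no-sign-mixing property; once this is in place, the module step is a direct transcription of the argument of Proposition \ref{prop:dcpcanonique}, and the final assembly is routine bookkeeping on dimension vectors.
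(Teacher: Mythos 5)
Your proof is correct and follows essentially the same route as the paper: rule out opposite signs at any coordinate via the vanishing of the mixed Ext between modules and shifted projectives (so that $[\mathbf d+\mathbf d']_+=[\mathbf d]_++[\mathbf d']_+$), then transfer the module-module Ext-vanishing to $\Ext^1_{kQ}$ via the BMRRT formula and invoke the Crawley-Boevey--Schr\"oer dominance of the direct-sum morphism to conclude. Your version is slightly more explicit in unpacking all four Ext pieces and in computing $\Ext^1_{\mathcal C_Q}(P_i[1],N)\simeq\Hom_H(P_i,N)$ directly in the orbit category, but these are the same steps the paper uses.
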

	\begin{proof}
		Assume that there is some $i \in Q_0$ such that $d_i<0$ and $d_i'>0$, then for every $M' \in \rep(Q,\textbf d')$, we have 
		$$0<d_i' =  \dim \Hom_{kQ}(P_i,M') \leq \dim \Ext^1_{\mathcal C_Q}(M',P_i[1])$$
		so that $\Ext^1_{\mathcal C_Q}(\textbf d, \textbf d')$ does not vanish generally. Thus, we can assume that $d_i$ and $d_i'$ are of the same sign for every $i \in Q_0$, in particular, $[\textbf d+\textbf d']_+=[\textbf d]_++[\textbf d']_+$.
		As $\Ext^1_{\mathcal C_Q}(\textbf d, \textbf d')=0$, we have $\Ext^1_{kQ}(\textbf d, \textbf d')=0$ and $\Ext^1_{kQ}(\textbf d', \textbf d)=0$. It thus follows from \cite{CBS} that the morphism
		$$\phi: \left\{\begin{array}{rcl}
			\rep(Q,[\textbf d]_+) \times \rep(Q,[\textbf d']_+) & \fl & \rep(Q,[\textbf d+\textbf d']_+)\\
			(U,V) & \mapsto & U\oplus V
		\end{array}\right.$$
		is dominant.
		As $U_{[\textbf d]_+}$ (respectively $U_{[\textbf d']_+}$) is open in $\rep(Q,[\textbf d]_+)$ (respectively in $\rep(Q,[\textbf d']_+)$), it follows that $U_{[\textbf d]_+} \oplus U_{[\textbf d']_+}$ is open and dense in $\rep(Q,[\textbf d+\textbf d']_+)$. Thus, $X_{[\textbf d+\textbf d']_+}=X_{[\textbf d]_+}X_{[\textbf d']_+}$ and it follows easily that $X_{\textbf d}X_{\textbf d'} = X_{\textbf d+\textbf d'}$.
	\end{proof}
\end{section}

\begin{section}{From cluster monomials to generic variables}
	We now prove that the set of generic variables always contains the set of cluster monomials of the cluster algebra $\mathcal A(Q)$ and that these sets coincide if and only if $Q$ is a Dynkin quiver.

	\begin{maprop}\label{prop:clustermonomials}
		Let $Q$ be an acyclic quiver. Then, the following hold~:
		\begin{enumerate}
			\item For any cluster monomial $c$ in $\mathcal A(Q)$, $c=X_{\den(c)}$~;
			\item For any rigid object $M$ in $\CC_Q$, $X_M=X_{\ddim M}$~; 
			\item $\mathcal M(Q) \subset \mathcal G(Q)$~; 
			\item $\mathcal M(Q) = \mathcal G(Q)$ if and only if $Q$ is a Dynkin quiver. 
		\end{enumerate}
	\end{maprop}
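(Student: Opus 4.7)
The plan is to prove (2) first; (1) and (3) then follow quickly. For a rigid $M \in \CC_Q$, decompose $M = H^0(M) \oplus P_M[1]$. Since rigid $H$-modules have open $\GL$-orbits in their representation variety (a classical consequence of the fact that $\dim \Ext^1_H(N,N)$ equals the codimension of the orbit of $N$), and $H^0(M)$ is itself rigid in $H\modg$ (rigidity in $H\modg$ and in $\CC_Q$ coincide for $H$-modules, as recalled in the introduction), its orbit in $\rep(Q, [\ddim M]_+)$ is open and dense. Consequently, this orbit meets the dense open $U_{[\ddim M]_+}$ provided by Corollary \ref{corol:Ud}, and since $X_?$ is constant on isomorphism classes, $X_{H^0(M)} = X_{[\ddim M]_+}$. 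The multiplicativity axiom (c) in the definition of $X_?$ yields $X_M = X_{H^0(M)} X_{P_M[1]} = X_{[\ddim M]_+} \cdot \prod_{d_i < 0} u_i^{-d_i} = X_{\ddim M}$, which is (2). For (1), a cluster monomial $c$ equals $X_M$ for some rigid $M \in \CC_Q$ by the Caldero-Keller theorem cited in the introduction, so (2) combined with Remark \ref{rmq:denXd} gives $c = X_{\ddim M} = X_{\den(c)}$. Statement (3) is then immediate from (1).

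For the Dynkin direction of (4), every positive root is a real Schur root corresponding to a unique indecomposable rigid $H$-module. Given $\textbf d \in \Z^{Q_0}$ with canonical decomposition $[\textbf d]_+ = \textbf e_1 \oplus \cdots \oplus \textbf e_n$, I set $M = \bigoplus_{i=1}^n M_i$ where each $M_i$ is indecomposable of dimension $\textbf e_i$. By Proposition \ref{prop:Kacdcp}, $\Ext^1_H(M_k, M_l) = 0$ for $k \neq l$, and each $M_i$ is rigid, so $M$ itself is rigid in $H\modg$ (hence in $\CC_Q$). Writing $R = M \oplus P_{\textbf d}[1]$, I check that $R$ is rigid in $\CC_Q$: self-extensions of $P_{\textbf d}[1]$ vanish because $H$ is hereditary and the $P_i$ are projective, while the mixed groups $\Ext^1_{\CC_Q}(M, P_i[1])$ for $d_i < 0$ vanish via the standard identification with $D\Hom_H(P_i, M)$, which is zero because $[\textbf d]_+$ has $i$-th coordinate zero whenever $d_i < 0$. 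The $2$-Calabi-Yau property then takes care of $\Ext^1_{\CC_Q}(P_i[1], M)$. Applying (2) yields $X_{\textbf d} = X_R \in \mathcal M(Q)$.

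For the converse, suppose $Q$ is not Dynkin and pick a positive imaginary root $\delta \in \Phi^{\im}_{>0}(Q)$. Then $X_{\delta} \in \mathcal G(Q)$ by definition, but I claim $X_\delta \notin \mathcal M(Q)$. Indeed, if $X_\delta$ were a cluster monomial, by (1) there would exist a rigid $R \in \CC_Q$ with $\ddim R = \delta \in \N^{Q_0}$, forcing $R$ to be a non-zero rigid $H$-module. But rigidity in $H\modg$ forces $(\ddim R, \ddim R) = \<R,R\> = \dim \End_H(R) \geq 1$, contradicting $(\delta, \delta) \leq 0$. The main technical obstacle is the rigidity verification in the Dynkin case: one must control the mixed Ext-groups between $H$-modules and shifts of projective modules inside $\CC_Q$, using the $2$-Calabi-Yau duality and the explicit description of $\Hom_{\CC_Q}$ via the orbit category. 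Everything else reduces cleanly to standard ingredients (openness of rigid orbits, Kac's theorem and the canonical decomposition, and the Caldero-Keller identification of cluster monomials with $\{X_M : M \text{ rigid in } \CC_Q\}$).
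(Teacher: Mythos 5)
Your proof is correct and follows essentially the same route as the paper: open orbits of rigid modules to identify $X_{H^0(M)}$ with $X_{[\ddim M]_+}$, the multiplicativity axiom to get $X_M = X_{\ddim M}$, the Caldero--Keller identification of cluster monomials with characters of rigid objects, and Kac's canonical decomposition plus the 2-Calabi--Yau computation of $\Ext^1_{\CC_Q}(-,P_i[1])$ for the Dynkin direction, with an imaginary root giving the converse. The only cosmetic difference is that you prove (2) before (1) and plug directly into the definition $X_{\ddim M}=X_{[\ddim M]_+}\prod_{d_i<0}u_i^{-d_i}$ rather than routing through Lemma~\ref{lem:multiplicativity} as the paper does, which is a slight (and harmless) streamlining.
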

	\begin{proof}
		Let $c$ be a cluster monomial in $\mathcal A(Q)$, then there exists some rigid object $M$ in $\CC_Q$ such that $X_M=c$ and, according to \cite[Theorem 3]{CK2}, $\ddim M=\den(c)$. As $M$ is rigid, we have $[\ddim M]_+=\ddim H^0(M)$ and $[\ddim M]_-=\ddim P_M[1]$. The module $H^0(M)$ being rigid, its orbit is open and dense in $\rep(Q,[\ddim M]_+)$ so that $X_{H^0(M)}=X_{[\ddim M]_+}$. Since $M$ is a rigid object, we have $\ddim P_M[1]=[\ddim M]_-$ and thus $X_{P_M[1]}=X_{[\ddim M]_-}$. It follows that $c = X_M = X_{H^0(M)}X_{P_M[1]} = X_{[\ddim M]_+}X_{[\ddim M]_-}$ but $\Ext^1_{\mathcal C_Q}([\ddim M]_+,[\ddim M]_-)$ vanishes generally since $\Ext^1_{\CC_Q}(H^0(M),P_M[1])=0$. Thus, it follows from Lemma \ref{lem:multiplicativity} that
		$$X_{[\ddim M]_+}X_{[\ddim M]_-} = X_{[\ddim M]_++[\ddim M]_-} = X_{\ddim M} = X_{\den(c)}.$$
		This proves the first assertion. The second and third assertions are direct consequences of the first.
		
		We now prove the fourth assertion. Assume that $Q$ is a Dynkin quiver and fix some element $\textbf d \in \Z^{Q_0}$. Then $X_{\textbf d}=X_{[\textbf d]_+}X_{P_{\textbf d}[1]}$. According to Kac's canonical decomposition there exists $\textbf e_1, \ldots, \textbf e_n \in \Phi^{\Sc}(Q)$ such that $[\textbf d]_+=\textbf e_1 \oplus \cdots \oplus \textbf e_n$. In particular, a representation $M \in U_{[\textbf d]_+} \cap \mathfrak M_{[\textbf d]_+}$ can be written $M=M_1 \oplus \cdots \oplus M_n$ where each of the $M_i$ is indecomposable in $\mathfrak M_{\textbf e_i}$ and such that $\Ext^1_{H}(M_i,M_j)=0$ if $i \neq j$. Since $Q$ is Dynkin, each of the $M_i$ is rigid. It follows that $M$ is a rigid module. Moreover, for every $i \in Q_0$ such that $d_i<0$, we have $\Ext^1_{\CC_Q}(M,P_i[1])=0$ so that $M \oplus P_{\textbf d}[1]$ is rigid in $\CC_Q$. Thus, $X_{\textbf d}=X_{[\textbf d]_+}X_{P_{\textbf d}[1]}=X_{M \oplus P_{\textbf d}[1]}$ is the image of a rigid object and thus, it is a cluster monomial. 
		
		Conversely, fix $Q$ a representation-infinite quiver and assume that $\mathcal G(Q)=\mathcal M(Q)$. It follows that for every $\textbf d \in \N^{Q_0}$, there exists some rigid object $M$ in $\CC_Q$ such that $X_M=X_{\textbf d}$. According to Remark \ref{rmq:denXd}, $\textbf d=\den(X_{\textbf d})=\ddim M$. Since $M$ is rigid in $\CC_Q$ and $\ddim M \in \N^{Q_0}$, it follows that $M$ is a rigid $H$-module of dimension vector $\textbf d$. In particular, if $\textbf d$ is a positive imaginary root of $Q$, we get $\dim \End_{H}(M)-\dim \Ext^1_{H}(M,M)=\<M,M\>=\<\textbf d,\textbf d\> \leq 0$, so that $\dim \Ext^1_{H}(M,M)>0$ which is a contradiction. Thus, $\mathcal M(Q)$ is a proper subset of $\mathcal G(Q)$. 
	\end{proof}
\end{section}

\begin{section}{The affine case}
	In this section, we provide an explicit description of $\mathcal G(Q)$ when $Q$ is a quiver of affine type, that is, an acyclic quiver whose underlying diagram is an affine (also called euclidean or extended Dynkin) diagram of type $\Aaffine_n$ ($n \geq 1$), $\Daffine_n$ ($n \geq 4$) or $\Eaffine_n$ ($n=6,7,8$). An explicit list of these diagrams can for instance be found in \cite{ringel:1099}. We recall some well-known facts concerning representation theory of affine quivers. For more detailed results, we refer the reader to \cite{ringel:1099} or \cite{SS:volume2}. 

	There exists an unique element $\delta \in \Phi_{>0}(Q)$ such that $\Phi^{\im}_{>0}(Q)=\Z_{>0} \delta$. The root $\delta$ is called the \emph{positive minimal imaginary root} of $Q$. This root is \emph{sincere}, that is, $\delta_i>0$ for any $i \in Q_0$. Moreover, there exists an \emph{extending vertex} $e \in Q_0$, that is, a vertex $e$ such that $\delta_e=1$ and such that the quiver obtained from $Q$ by removing $e$ is of Dynkin type.

	The Auslander-Reiten quiver $\Gamma(H\modg)$ of $H$-mod contains infinitely many connected components. There exists a connected component containing all the projective (resp. injective) modules, called \emph{preprojective} (resp. \emph{preinjective}) component of $\Gamma(H\modg)$ and denoted by $\mathcal P$ (resp. $\mathcal I$). The other components are called \emph{regular}. A $H$-module $M$ is called \emph{preprojective} (resp. \emph{preinjective}, \emph{regular}) if each indecomposable direct summand of $M$ belong to a preprojective (resp. preinjective, regular) component. It is known that if $P$ is a preprojective module, $I$ a preinjective module and $R$ a regular module, then
	$$\Hom_{H}(R,P)=0, \, \Hom_{H}(I,P)=0, \, \Hom_{H}(I,R)=0,$$
	$$\Ext^1_{H}(P,R)=0, \, \Ext^1_{H}(P,I)=0 \textrm{ and } \Ext^1_{H}(R,I)=0.$$

	The regular components in the Auslander-Reiten quiver $\Gamma(H\modg)$ of $H$-mod form a $\P^1(\k)$-family of tubes and we denote by $\lambda \mapsto \mathcal T_\lambda$ this parametrization. For simplicity, we set $\P^1=\P^1(\k)$. We will recall an explicit description of this parametrization in the proof of Lemma \ref{lem:XMlambda}. The tubes of rank one are called \emph{homogeneous}, the tubes of rank $p>1$ are called \emph{exceptional}. At most three tubes are exceptional. We denote by $\P^{\mathcal H}$ the set of parameters of homogeneous tubes. For any $\lambda \in \P^{\mathcal H}$, we denote by $M_\lambda$ the unique quasi-simple module in the tube $\mathcal T_{\lambda}$. 

	There are neither morphisms nor extensions between the tubes in the sense that if $\lambda \neq \mu \in \P^1$ and $M,N$ are indecomposable modules respectively in $\mathcal T_\lambda$ and $\mathcal T_\mu$, then $\Hom_{H}(M,N)=0$ and $\Ext^1_{H}(M,N)=0$.

	For any quasi-simple module $R$ in a tube $\mathcal T$ and any integer $n \geq 1$, we denote by $R^{(n)}$ the unique indecomposable $H$-module with quasi-socle $R$ and quasi-length $n$. With these notations, for any $n \geq 1$ and any $\lambda \in \P^{\mathcal H}$, we have $\ddim M_\lambda^{(n)}=n \delta$. 

	An indecomposable regular module $M$ is rigid if and only if it is contained in an exceptional tube and $\ddim M \lneqq \delta$. An indecomposable regular module $M$ is a Schur module if and only if $\ddim M \leq \delta$.

	It is convenient to introduce the so-called \emph{defect form} on $\Z^{Q_0}$ given by
	$$\d_? : \left\{\begin{array}{rcl}
		\Z^{Q_0} & \fl & \Z \\
		\textbf e & \mapsto & \d_{\textbf e}=\<\delta, \textbf e\>
	\end{array}\right.$$
	By definition, the defect $\d_M$ of a $H$-module $M$ is the defect $\d_{\ddim M}$ of its dimension vector. It is well-known that an indecomposable $H$-module $M$ is preprojective (resp. preinjective, regular) if and only if $\d_M < 0$ (resp. $>0$, $=0$).
	
	\begin{subsection}{Generic variables associated to positive real Schur roots}
		In the case of real Schur roots, we prove that the corresponding generic variables are cluster variables.
		\begin{monlem}\label{lem:realSchur}
			Let $Q$ be an affine quiver and $\textbf d \in \Phi^{\Sc}(Q)$. Then $X_{\textbf d}=X_{M}$ where $M$ is the unique (up to isomorphism) indecomposable (and necessarily rigid) module of dimension $\textbf d$.
		\end{monlem}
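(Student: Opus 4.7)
The plan is to show that the unique indecomposable $M$ with $\ddim M = \textbf{d}$ has open $GL(\textbf{d})$-orbit in $\rep(Q,\textbf{d})$, so that this orbit meets the dense open set $U_{\textbf{d}}$ from Corollary \ref{corol:Ud}, forcing $X_{\textbf{d}} = X_M$.

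First I would record the algebraic setup. Since $\textbf{d}$ is a positive real Schur root, Kac's theorem provides a unique (up to isomorphism) indecomposable $M \in \rep(Q,\textbf{d})$; moreover $(\textbf{d},\textbf{d}) = 1$, and the Schur condition gives $\dim \End_H(M) = 1$. From the identity $\<M,M\> = (\textbf{d},\textbf{d}) = 1$ one deduces $\dim \Ext^1_H(M,M) = 0$, i.e.\ $M$ is rigid.

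Next I would invoke the classical fact (Voigt's lemma / standard deformation theory of quiver representations) that the tangent space to the $GL(\textbf{d})$-orbit of $M$ in $\rep(Q,\textbf{d})$ has codimension $\dim \Ext^1_H(M,M) = 0$. Hence the orbit $GL(\textbf{d})\cdot M$ is open in $\rep(Q,\textbf{d})$, and by irreducibility of $\rep(Q,\textbf{d})$ it is dense.

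Finally I would combine this with the properties of $U_{\textbf{d}}$. Since $U_{\textbf{d}}$ is a non-empty open subset of the irreducible variety $\rep(Q,\textbf{d})$, it intersects the dense orbit $GL(\textbf{d})\cdot M$. Picking any $M' \in U_{\textbf{d}} \cap GL(\textbf{d})\cdot M$, we have $M' \simeq M$, so by Corollary \ref{corol:Ud} and invariance of $X_?$ under isomorphism,
$$X_{\textbf{d}} = X_{M'} = X_M.$$
Note that $M$ is in the image of the support of $U_{\textbf d}$, so $[\textbf d]_+ = \textbf d$ and $P_{\textbf d}[1] = 0$, which makes the definition of $X_{\textbf d}$ coincide with $X_{[\textbf d]_+}$ here. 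There is no serious obstacle; the only subtlety is checking that $\dim \Ext^1_H(M,M) = 0$ really does imply the orbit is open, which is a standard fact but should be cited explicitly.
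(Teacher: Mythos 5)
Your proof is correct and follows essentially the same route as the paper: Kac's theorem gives the unique indecomposable $M$, the Schur condition plus $\<\textbf d,\textbf d\>=1$ forces $\Ext^1_H(M,M)=0$, and rigidity puts $M$'s orbit in the generic locus. The only difference is that the paper concludes by citing Proposition \ref{prop:clustermonomials}(2) while you unpack the underlying open-orbit argument (Voigt's lemma) directly; this is a valid and slightly more self-contained presentation of the same idea.
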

		\begin{proof}
			As $\textbf d$ is a real root, Kac's theorem ensures that there exists an unique indecomposable representation $M$ of dimension $\textbf d$. Moreover, as $\textbf d$ is a Schur root, this representation has a trivial endomorphism ring. Now $1=\<\textbf d, \textbf d\>=\<M,M\>=\dim \End_{H}(M)-\dim \Ext^1_{H}(M,M)$ so $\dim \Ext^1_{H}(M,M)=0$. It follows from Proposition \ref{prop:clustermonomials} that $X_M=X_{\ddim M}=X_{\textbf d}$. 
		\end{proof} 
	\end{subsection}

	\begin{subsection}{Generic variables associated to positive imaginary roots}\label{ssection:genericim}
		For any $n \geq 0$, we denote by $S_n$ the $n$-th \emph{normalized Chebyshev polynomial of the second kind} (see Section \ref{section:basesKronecker} for a definition).
		
		\begin{monlem}[\cite{CZ}]\label{lem:Chebyshev}
			Fix $Q$ an affine quiver, $n \geq 1$ and $\lambda \in \P^{\mathcal H}$. Then,
			$$X_{M_\lambda^{(n)}}=S_n(X_{M_\lambda}).$$
		\end{monlem}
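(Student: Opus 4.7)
The plan is to compute both sides directly by unpacking the Caldero-Chapoton formula on the uniserial module $M_\lambda^{(n)}$. The submodule lattice of $M_\lambda^{(n)}$ inside the homogeneous tube $\mathcal T_\lambda$ is the complete chain $0 \subset M_\lambda \subset M_\lambda^{(2)} \subset \cdots \subset M_\lambda^{(n)}$, with $\ddim M_\lambda^{(k)} = k\delta$, so every grassmannian $\Gr_{\mathbf e}(M_\lambda^{(n)})$ reduces to a single reduced point when $\mathbf e = k\delta$ for some $0 \leq k \leq n$ and is empty otherwise. Substituting into (\ref{eq:XM}) yields
\[
X_{M_\lambda^{(n)}} \;=\; \sum_{k=0}^n P^k\, Q^{n-k}
\quad\text{with}\quad
P = \prod_{i \in Q_0} u_i^{-\<\delta, \alpha_i\>},\quad Q = \prod_{i \in Q_0} u_i^{-\<\alpha_i, \delta\>},
\]
and in particular $X_{M_\lambda} = P + Q$.

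The key algebraic input is that the minimal positive imaginary root $\delta$ lies in the radical of the symmetrized Euler form on $\Z^{Q_0}$, a standard feature of affine root systems. Equivalently, $\<\delta, \mathbf e\> + \<\mathbf e, \delta\> = 0$ for every $\mathbf e \in \Z^{Q_0}$, which applied to $\mathbf e = \alpha_i$ gives $PQ = 1$. Setting $z = P + P^{-1} = X_{M_\lambda}$, the previous expression then collapses into the symmetric Laurent polynomial
\[
X_{M_\lambda^{(n)}} \;=\; \sum_{k=0}^n P^{2k-n} \;=\; \frac{P^{n+1} - P^{-(n+1)}}{P - P^{-1}},
\]
which is precisely the standard closed form at $z = P + P^{-1}$ of the $n$-th normalized Chebyshev polynomial $S_n$ of the second kind. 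To finish, I would either invoke that closed form directly, or verify by a one-line telescoping that both sides satisfy the three-term recurrence $S_{n+1}(z) = z\,S_n(z) - S_{n-1}(z)$ with $S_0 = 1$ and $S_1(z) = z$: indeed $z\cdot P^{2k-n} = P^{2k-n+1} + P^{2k-n-1}$, and summing over $k$ produces exactly $X_{M_\lambda^{(n+1)}} + X_{M_\lambda^{(n-1)}}$.

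The main obstacle is justifying the radical property of $\delta$; once $PQ = 1$ is in hand everything else is elementary Laurent-monomial bookkeeping. This property is classical for affine Kac-Moody root systems and follows from the isotropy $\<\delta, \delta\> = 0$ together with the fact that the symmetrized Euler form has rank $|Q_0| - 1$ in the affine setting; alternatively, it can be verified case-by-case from the explicit list of affine diagrams of types $\Aaffine$, $\Daffine$, $\Eaffine$.
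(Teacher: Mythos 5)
Your computation stalls at the first step. You claim that $\Gr_{\textbf e}(M_\lambda^{(n)})$ is non-empty only when $\textbf e = k\delta$ with $0 \leq k \leq n$, appealing to the uniserial chain $0 \subset M_\lambda \subset \cdots \subset M_\lambda^{(n)}$. But that chain is the lattice of \emph{regular} submodules, that is, submodules lying inside the tube. The module grassmannian in the Caldero--Chapoton formula counts all $H$-submodules of $M_\lambda^{(n)}$, and a regular module over an affine quiver always has non-regular (preprojective) submodules, whose dimension vectors are not multiples of $\delta$.

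The Kronecker quiver already refutes your claim at $n=1$. There $\delta=(1,1)$, and $M_\lambda$ contains the preprojective simple $S_1$ of dimension vector $(1,0)$ as a submodule, so $\Gr_{(1,0)}(M_\lambda)$ is a point. The Caldero--Chapoton formula therefore gives
\[
X_{M_\lambda} = u_1 u_2^{-1} + u_1^{-1}u_2^{-1} + u_1^{-1}u_2 = \frac{1+u_1^2+u_2^2}{u_1u_2},
\]
a sum of three Laurent monomials, while your $P+Q = u_1u_2^{-1} + u_1^{-1}u_2$ has only two. So the closed form $X_{M_\lambda^{(n)}}=\sum_{k=0}^n P^kQ^{n-k}$ is already false for $n=1$, and everything downstream in your proof rests on it.

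Your subsidiary observation that $\delta$ lies in the radical of the symmetrized Euler form, so that $PQ=1$, is correct and does play a role in the theory, but it cannot compensate for the dropped grassmannian contributions. What the lemma actually requires is the three-term recurrence
\[
X_{M_\lambda^{(n+1)}} = X_{M_\lambda}\,X_{M_\lambda^{(n)}} - X_{M_\lambda^{(n-1)}},
\]
established in \cite{CZ} (and, for arbitrary tubes, in \cite{Dupont:stabletubes}) via the cluster multiplication theorem applied to exact sequences and Auslander--Reiten triangles inside the tube, rather than by a direct enumeration of submodules; the preprojective submodules you omitted are exactly what makes that recurrence non-trivial to prove. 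Given the recurrence and the base cases $X_{M_\lambda^{(0)}}=1$ and $X_{M_\lambda^{(1)}}=X_{M_\lambda}$, the Chebyshev identity follows by induction, as in your closing remark.
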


		We now prove that if $M$ is an indecomposable module in an homogeneous tube, then $X_M$ only depends on the quasi-length of $M$ and not on its quasi-socle.
		\begin{monlem}\label{lem:XMlambda}
			Let $Q$ be an affine quiver. Then for any $\lambda,\mu \in \P^{\mathcal H}$ and any $n \geq 1$, we have 
			$$X_{M_\lambda^{(n)}}=X_{M_\mu^{(n)}}.$$
		\end{monlem}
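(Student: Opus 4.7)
The plan is to use Lemma~\ref{lem:Chebyshev} to reduce to the case $n=1$, and then to prove that $X_{M_\lambda}$ coincides with the generic variable $X_\delta$ for every $\lambda\in\P^{\mathcal H}$.

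Applying Lemma~\ref{lem:Chebyshev} at $\lambda$ and at $\mu$ gives $X_{M_\lambda^{(n)}}=S_n(X_{M_\lambda})$ and $X_{M_\mu^{(n)}}=S_n(X_{M_\mu})$, so it is enough to treat the case $n=1$, namely $X_{M_\lambda}=X_{M_\mu}$ for all $\lambda,\mu\in\P^{\mathcal H}$. Unfolding the definition of $X_\delta$ through Corollary~\ref{corol:Ud}, the stronger equality $X_{M_\lambda}=X_\delta$ reduces to checking $\chi(\Gr_{\textbf e}(M_\lambda))=n_{\textbf e}$ for every $\textbf e\leq\delta$, where $n_{\textbf e}$ denotes the generic value of the constructible function $\phi_{\delta,\textbf e}$. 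Since $\delta$ is a Schur root and each $M_\lambda$ ($\lambda\in\P^{\mathcal H}$) is Schur with $\dim\End_{H}(M_\lambda)=1$, the orbit $O(M_\lambda)$ has codimension one in $\rep(Q,\delta)$, and the union $\bigcup_{\lambda\in\P^{\mathcal H}}O(M_\lambda)$ is dense in $\rep(Q,\delta)$: its complement consists only of the finitely many codimension-one orbits coming from the exceptional tubes together with lower-dimensional strata of decomposable modules. By density this union meets $U_{\delta,\textbf e}$, providing at least one $\lambda_0\in\P^{\mathcal H}$ with $\chi(\Gr_{\textbf e}(M_{\lambda_0}))=n_{\textbf e}$.

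To extend the equality from $\lambda_0$ to every $\lambda\in\P^{\mathcal H}$, the next step is to realize $\{M_\lambda\}_{\lambda\in\P^{\mathcal H}}$ as the fibers of an algebraic family over the irreducible curve $\P^{\mathcal H}$ and pull $\phi_{\delta,\textbf e}$ back; the pullback, being constructible on an irreducible curve, is automatically constant off a finite subset, hence equal to $n_{\textbf e}$ on a cofinite subset of $\P^{\mathcal H}$. The main obstacle is ruling out genuinely exceptional values inside $\P^{\mathcal H}$, which requires exploiting the structural uniformity of homogeneous tube modules: all $M_\lambda$ ($\lambda\in\P^{\mathcal H}$) share identical $\End$ and $\Ext^1$ dimensions, and every proper submodule of $M_\lambda$ is preprojective (a regular indecomposable submodule would lie in the same homogeneous tube as $M_\lambda$ and must coincide with it by dimension reasons, while preinjective indecomposable submodules are excluded by $\Hom_{H}(I,M_\lambda)=0$ for any preinjective $I$). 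This uniformity permits one to verify that the quiver Grassmannian family $\Gr_{\textbf e}(M_\lambda)\to\P^{\mathcal H}$ is flat and proper, which forces the fiberwise Euler characteristic to be globally constant, giving $\chi(\Gr_{\textbf e}(M_\lambda))=n_{\textbf e}$ for every $\lambda\in\P^{\mathcal H}$. Combined with the Chebyshev reduction, this yields $X_{M_\lambda^{(n)}}=S_n(X_\delta)=X_{M_\mu^{(n)}}$ for all $\lambda,\mu\in\P^{\mathcal H}$ and all $n\geq 1$.
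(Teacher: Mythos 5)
Your Chebyshev reduction to $n=1$ matches the paper, and your observation that $\bigcup_{\lambda\in\P^{\mathcal H}}O(M_\lambda)$ is dense in $\rep(Q,\delta)$—so that $X_{M_{\lambda_0}}=X_\delta$ for at least one $\lambda_0$—is correct. But the extension from this generic $\lambda_0$ to \emph{all} $\lambda\in\P^{\mathcal H}$ rests on a false general principle. You assert that because the family $\Gr_{\textbf e}(M_\lambda)\to\P^{\mathcal H}$ is flat and proper, the fiberwise topological Euler characteristic must be constant. This is wrong: flatness and properness control the Hilbert polynomial (hence arithmetic invariants), not the topological Euler characteristic, which can and does jump in flat proper families—a family of plane cubics degenerating from a smooth cubic ($\chi=0$) to a nodal one ($\chi=1$) is the standard counterexample. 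To make this strategy work you would additionally need smoothness of every $\Gr_{\textbf e}(M_\lambda)$ (then Ehresmann's theorem would give local triviality and constant $\chi$), but $M_\lambda$ is not rigid ($\dim\Ext^1_H(M_\lambda,M_\lambda)=1$), so the standard smoothness results for quiver Grassmannians of rigid modules do not apply, and you supply no argument in their place. The observation that every proper submodule of $M_\lambda$ is preprojective is true and potentially useful, but it is not turned into a proof of smoothness or of constancy of $\chi$.

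The paper sidesteps these geometric difficulties entirely by a more algebraic route: using Crawley-Boevey's one-point extension $P=P_e\hookrightarrow L$ (with $e$ an extending vertex), it computes $\Ext^1_{\CC_Q}(P,M_\lambda)\simeq\k$ and invokes the one-dimensional cluster multiplication formula of Caldero–Keller to write $X_P X_{M_\lambda}=X_L+X_B$, where a defect computation shows $B$ is a fixed object independent of $\lambda$. Independence of $X_{M_\lambda}$ then follows by cancellation. That argument is both shorter and avoids any claim about the geometry of the individual quiver Grassmannians. Your approach, if the smoothness or constancy-of-$\chi$ gap could be closed, would be more directly geometric, but as written it does not establish the lemma.
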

		\begin{proof}
			By Lemma \ref{lem:Chebyshev}, it is enough to prove it for $n=1$. For this, we recall Crawley-Boevey's construction of regular representations of dimension $\delta$ by one-point extensions \cite{CB:lectures}. Let $e$ be an extending vertex of $Q$. We set $P=P_e$ and $\textbf p=\ddim P$. Since $e$ is an extending vertex, then $\d_P=-1$. Let $L$ be the unique indecomposable representation of dimension vector $\delta +\textbf p$. Then $L$ has also defect $-1$. Then $\Hom_{H}(P,L) \simeq \k^2$ and for any morphism $0 \neq \lambda \in \Hom_{H}(P,L)$, $\coker \lambda$ is an indecomposable regular module of dimension vector $\delta$. Moreover, $\coker \lambda \simeq \coker \lambda'$ if and only if $\lambda = \alpha \lambda'$ for some $0 \neq \alpha \in \k$. Then $\lambda \mapsto \coker \lambda$ induces a bijection from $\P\Hom_{H}(P,L)$ to the set of all tubes of $\Gamma(H\modg)$ by sending $\lambda \in \P\Hom_{H}(P,L)$ to an indecomposable regular module of dimension $\delta$. We thus identify $\P\Hom_{H}(P,L)$ and $\P^1(\k)$ and we say that $\lambda \in \P^{\mathcal H}$ to signify that $\coker \lambda$ is a quasi-simple in the homogeneous tube $\mathcal T_{\lambda}$. Note that in this case $M_\lambda \simeq \coker \lambda$.
			
			Fix $0 \neq \lambda \in \Hom_{H}(P,L)$, then there are isomorphisms of $\k$-vectors spaces
			\begin{align*}
				\Ext^1_{\mathcal C_Q}(P,\coker \lambda) 
					& \simeq \Ext^1_{H}(P,\coker \lambda) \oplus \Ext^1_{H}(\coker \lambda,P) \\
					& \simeq \Ext^1_{H}(\coker \lambda,P) \\
					& \simeq \Hom_{H}(P,\coker \lambda) \\
					& \simeq \k
			\end{align*}
			where the last equality follows from the fact that $\dim \Hom_{H}(P,\coker \lambda)= (\ddim \coker \lambda)_e = \delta_e = 1$ since $e$ is an extending vertex.
			
			It follows from \cite[Theorem 2]{CK2} formula that 
			$$X_PX_{\coker \lambda}=X_L + X_B$$
			where $B \simeq \ker f \oplus \coker f[-1]$ for any non-zero $f \in \Hom_{H}(P,\tau \coker \lambda)$.
			
			Fix $\lambda \in \P^{\mathcal H}$. Assume first that $0 \neq f$ is not surjective in $\Hom_{H}(P,\coker \lambda)$, then $\im f$ is preprojective and we have a short exact sequence
			$$0 \fl \ker f \fl P \xrightarrow{f} \im f \fl 0$$
			and thus $\d_{\ker f}+\d_{\im f}=\d_P=-1$. Since $\ker f$ and $\im f$ are preinjective, we necessarily have $\d_{\ker f}=0$ and $\ker f=0$. 
			It follows that $B \simeq \tau^{-1}\coker f$. But $\d_{\coker f} =\d_{(\tau \coker \lambda)/P}=-\d_P=1$ and $\tau \coker \lambda \simeq \coker \lambda$ is quasi-simple in an homogeneous tube. Thus, $\coker f$ is preinjective and by defect, it is indecomposable. Moreover, the dimension vector of $\coker f$ is $\ddim \coker f=\delta-\textbf p$. Thus, for every $\lambda \in \P^{\mathcal H}$ and every non-surjective map $0 \neq f \in \Hom_{H}(P,\tau \coker \lambda)$, $\coker f$ is the unique indecomposable representation $V$ of dimension $\delta-\textbf p$. If there exists some non-surjective $f \in \Hom_{H}(P,\tau \coker \lambda)$, then $f$ is injective, so $\ddim P \lneqq \delta$ and thus every map $g \in \Hom_{H}(P,\tau \coker \lambda)$ is non-surjective for any $\lambda \in \P^{\mathcal H}$. Thus,
			$$X_PX_{\coker \lambda}=X_L+X_V$$
			for any $\lambda \in \P^{\mathcal H}$. In particular, $X_{M_\lambda}$ does not depend on the parameter $\lambda \in \P^{\mathcal H}$.
			
			Assume now that $0 \neq f \in \Hom_{H}(P,\tau \coker \lambda)$ is surjective. Then $\ker f$ is a preprojective module of dimension $\textbf p-\delta$. Thus, $\ker f$ has defect $-1$, it is thus necessarily indecomposable. As there exists an unique indecomposable representation $U$ of dimension $\textbf p-\delta$, it follows that $\ker f \simeq U$ for every $\lambda \in \P^{\mathcal H}$ and every surjective $f \in \Hom_{H}(P,\tau \coker \lambda)$. Then, it follows from the above discussion that if one of the $f \in \Hom_{H}(P,\tau \coker \lambda)$ is surjective for some $\lambda \in \P^{\mathcal H}$, then $f\in \Hom_{H}(P,\tau \coker \lambda)$ is surjective for any $\lambda \in \P^{\mathcal H}$. In this case, we get
			$$X_PX_{\coker \lambda}=X_L+X_U$$
			for every $\lambda \in \P^{\mathcal H}$. In particular, $X_{M_\lambda}$ does not depend on the parameter $\lambda \in \P^{\mathcal H}$. 
		\end{proof}

		If $\mathcal T$ is an exceptional tube of rank $p>1$ and $E$ is a quasi-simple module in $\mathcal T$, we denote by $M_E=E^{(p)}$ the unique indecomposable module in $\mathcal T$ with quasi-socle $E$ and quasi-length $p$. Equivalently, $M_E$ is the unique indecomposable module of dimension $\delta$ containing $E$ as a submodule.

		\begin{monlem}\label{lem:imaginary}
			For any $n \geq 1$ and any $\lambda \in \P^{\mathcal H}$, 
			$$X_{n \delta}=X_{M_\lambda}^n.$$
		\end{monlem}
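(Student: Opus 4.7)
The plan is to reduce the statement to Proposition \ref{prop:dcpcanonique} by computing Kac's canonical decomposition of $n\delta$, and then to use Lemma \ref{lem:XMlambda} to identify $X_\delta$ with $X_{M_\lambda}$.

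The key claim is that the canonical decomposition of $n\delta$ is $n\delta = \delta \oplus \cdots \oplus \delta$ with $n$ summands. To prove this via Proposition \ref{prop:Kacdcp}, I would choose $n$ pairwise distinct parameters $\lambda_1,\ldots,\lambda_n \in \P^{\mathcal H}$, which is possible because at most three tubes are exceptional so $\P^{\mathcal H}$ is infinite. The quasi-simples $M_{\lambda_1},\ldots,M_{\lambda_n}$ all have dimension vector $\delta$ and trivial endomorphism ring, hence each is a Schur representation in $\rep(Q,\delta)$. Since there are no nonzero extensions between indecomposables lying in distinct tubes, $\Ext^1_H(M_{\lambda_i},M_{\lambda_j})=0$ for $i\neq j$. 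Proposition \ref{prop:Kacdcp} then yields the claimed canonical decomposition, and in particular shows that $\delta$ itself is a Schur root.

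Applying Proposition \ref{prop:dcpcanonique} to this decomposition gives
$$X_{n\delta}=X_\delta^n.$$
It remains to show $X_\delta=X_{M_\lambda}$ for any $\lambda\in\P^{\mathcal H}$. Since the locus $\mathfrak M_\delta$ of indecomposable Schur representations of dimension $\delta$ is dense open in $\rep(Q,\delta)$ and contains every $M_\mu$ with $\mu\in\P^{\mathcal H}$, one can pick $\mu\in\P^{\mathcal H}$ such that $M_\mu\in U_\delta\cap\mathfrak M_\delta$; by definition of $X_\delta$ via Corollary \ref{corol:Ud} this gives $X_\delta=X_{M_\mu}$. Lemma \ref{lem:XMlambda} (in the case $n=1$) shows that $X_{M_\mu}=X_{M_\lambda}$ for every $\lambda\in\P^{\mathcal H}$, so $X_\delta=X_{M_\lambda}$ and therefore $X_{n\delta}=X_{M_\lambda}^n$.

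The most delicate point is verifying the canonical decomposition, but once one exploits the fact that distinct tubes are orthogonal with respect to both $\Hom$ and $\Ext^1$, Kac's characterization does all the work; the rest of the argument is essentially bookkeeping between the open sets $\mathfrak M_\delta$ and $U_\delta$.
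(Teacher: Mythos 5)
Your overall structure matches the paper's: reduce to $n=1$ via the canonical decomposition $n\delta = \delta\oplus\cdots\oplus\delta$ together with Proposition \ref{prop:dcpcanonique}, then identify $X_\delta$ with $X_{M_\lambda}$ via Lemma \ref{lem:XMlambda}. Your verification of the canonical decomposition, choosing $n$ quasi-simples in pairwise distinct homogeneous tubes and invoking Proposition \ref{prop:Kacdcp}, is correct and is in fact spelled out more carefully than in the paper, which simply cites this decomposition as known.

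The step where you claim to find $\mu\in\P^{\mathcal H}$ with $M_\mu\in U_\delta\cap\mathfrak M_\delta$ has a gap. Your stated justification is that $\mathfrak M_\delta$ is dense open and contains every $M_\mu$; but $\mathfrak M_\delta$ also contains the modules $M_E$ of dimension $\delta$ at the top of each exceptional tube (these are also indecomposable Schur modules, being regular of dimension $\leq\delta$), and nothing you say rules out $U_\delta\cap\mathfrak M_\delta\subset\bigsqcup_E\mathcal O_{M_E}$. To close the gap, observe that there are only finitely many exceptional tubes and that each orbit $\mathcal O_{M_E}$ has positive codimension in $\rep(Q,\delta)$, namely $\dim\Ext^1_H(M_E,M_E)=1$ since $M_E$ is Schur but not rigid; hence $\bigcup_E\overline{\mathcal O_{M_E}}$ is a proper closed subset, and the nonempty open set $U_\delta\cap\mathfrak M_\delta$ must meet its complement, any point of which is $M_\mu$ for some $\mu\in\P^{\mathcal H}$. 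Once this is inserted your argument is actually cleaner and more direct than the paper's, which instead analyzes the Euler characteristics $\chi(\Gr_{\ddim E}(-))$ to show that their generic value on $\rep(Q,\delta)$ is zero even though it is nonzero on $M_E$, thereby forcing $U_\delta\cap\mathcal O_{M_E}=\emptyset$. Both routes ultimately hinge on the same codimension count, but yours dispenses with the Grassmannian computation altogether.
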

		\begin{proof}
			We first prove it for $n=1$. It is known that the canonical decomposition of $\delta$ is $\delta$ itself. It follows that $X_\delta=X_M$ for some indecomposable module $M$ of dimension vector $\delta$. Now, we know that the indecomposable modules of dimension vector $\delta$ are either the $M_\lambda$ for $\lambda \in \P^{\mathcal H}$, or the $M_E$ for $E$ quasi-simple in an exceptional tube. Fix now a quasi-simple $E$ in an exceptional tube, we claim that $\Gr_{\ddim E}(M)=\emptyset$ for any indecomposable module $M$ of dimension vector $\delta$ not isomorphic to $M_E$. Indeed, fix $\lambda \in \P^{\mathcal H}$ and $U \subset M_\lambda$ a submodule such that $\ddim U=\ddim E$. As $M_\lambda$ is quasi-simple, $U$ has to be preprojective and thus $\d_U <0$ but $\d_U=\d_E=0$, which is a contradiction and thus $\Gr_{\ddim E}(M_\lambda)=\emptyset$. Fix now $F$ another quasi-simple and assume that $U \subset M_F$ is a submodule such that $\ddim U=\ddim E$. It follows that $U$ decomposes into $U=U_P \oplus U_R$ where $U_P$ is preprojective and $U_R$ is regular. As $\d_U=\d_E=0$, we have $U_P=0$ and thus $U_R$ is regular. Now $\ddim U_R=\ddim E$ but $U_R$ is a regular submodule $M_F$, by uniseriality of the regular components, $U_R$ has to be indecomposable. As $\ddim E$ is a real root, it follows that $E \simeq U_R \subset M_F$ and thus $F=E$. As the $GL(\delta)$-orbit of $M_E$ is not open (since its codimension in $\rep(Q,\delta)$ is equal to $\dim \Ext^1_{H}(M_E,M_E)=1$), it follows that the value of $\chi(\Gr_{\ddim E}(-))$ has to be zero on $U_{\delta,\ddim E}$ so that $U_\delta \cap \mathcal O_{M_E} = \emptyset$ for any quasi-simple $E$ in an exceptional tube. It follows that 
			$$U_{\delta} \cap \mathfrak M_{\delta} \subset \bigsqcup_{\lambda \in \P^{\mathcal H}} \mathcal O_{M_\lambda}.$$ Thus, $X_\delta=X_{M_\lambda}$ for some $\lambda \in \P^{\mathcal H}$. Lemma \ref{lem:XMlambda} implies that $X_\delta=X_{M_\lambda}$ for any $\lambda \in \P^{\mathcal H}$.
			
			Now if $n >1$, the canonical decomposition of $n\delta$ is known to be $\delta \oplus \cdots \oplus \delta$. Thus, Proposition \ref{prop:dcpcanonique} implies that $X_{n\delta}=X_\delta^n=X_{M_\lambda}^n$.
		\end{proof}

		\begin{rmq}
			One should take care of the fact that the generic variable of dimension $n \delta$ for $n \geq 2$ is not the character associated to an indecomposable module of dimension $n \delta$ in an homogeneous tube. Indeed, for every $\lambda \in \P^{\mathcal H}$, we have $X_{n\delta}=X_{\delta}^n \neq S_n(X_\delta)=X_{M_\lambda^{(n)}}$ if $n \geq 2$. The indecomposable modules of dimension $n \delta$ do not appear in the context of generic variables, nevertheless they arise in Caldero-Zelevinsky construction (see Section \ref{section:basesKronecker} or \cite{CZ} for details).
		\end{rmq}
	\end{subsection}

	\begin{subsection}{Generic variables associated to positive real non-Schur roots}
		We now compute $X_{\textbf d}$ when $\textbf d$ is a real root which is not a Schur root. If $\textbf d$ is such a root, then according to Kac's theorem, there exists an unique indecomposable module $M$ of dimension vector $\textbf d$. If $M$ is preprojective or preinjective, it is known that $\End_{H}(M) \simeq \k$ and thus $\textbf d$ is a real Schur root. It follows that $M$ has to be a regular module. Moreover, we have $\delta \lneqq \textbf d$. Fix $\textbf d$ such a root, it is proved in \cite{Kac:infroot2} that $\textbf d = \delta \oplus \cdots \oplus \delta \oplus \textbf d_0$ where $\textbf d_0$ is the root of smallest height in $\textbf d+ \Z \delta \cap \Phi_{>0}(Q)$. In particular, $\textbf d_0$ is a real Schur root. The following lemma gives an explicit description of $X_{\textbf d}$ in this case.
		
		\begin{monlem}\label{lem:realnonSchur}
			Let $Q$ be an affine quiver, $\textbf d$ be a real root which is not a Schur root and write $\textbf d=\delta^{\oplus n} \oplus \textbf d_0$ its canonical decomposition. Then, 
			$$X_{\textbf d}=X_{M_\lambda}^nX_{M_0}$$
			where $\lambda$ is any element in $\P^{\mathcal H}$ and where $M_0$ is the unique (up to isomorphism) indecomposable module of dimension vector $\textbf d_0$. Moreover, $M_0$ is rigid.
		\end{monlem}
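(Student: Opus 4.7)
The plan is to combine the canonical decomposition formula of Proposition \ref{prop:dcpcanonique} with the two special cases already handled in Lemmas \ref{lem:imaginary} and \ref{lem:realSchur}, then argue rigidity of $M_0$ directly from the Euler form.

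First, I would invoke Proposition \ref{prop:dcpcanonique} applied to the canonical decomposition $\textbf{d} = \delta^{\oplus n} \oplus \textbf{d}_0$ (whose existence and form for non-Schur real roots is already recorded in the paragraph preceding the lemma, citing \cite{Kac:infroot2}). This yields
$$X_{\textbf{d}} = X_\delta^{\,n} \cdot X_{\textbf{d}_0}.$$
The first factor is handled by Lemma \ref{lem:imaginary}, which gives $X_\delta = X_{M_\lambda}$ for every $\lambda \in \P^{\mathcal H}$, hence $X_\delta^{\,n} = X_{M_\lambda}^{\,n}$.

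Next I would deal with $X_{\textbf{d}_0}$. Since $\textbf{d}_0$ is a real Schur root, Lemma \ref{lem:realSchur} applies verbatim and produces $X_{\textbf{d}_0} = X_{M_0}$, where $M_0$ is the unique (up to isomorphism) indecomposable module of dimension vector $\textbf{d}_0$, with trivial endomorphism ring. Substituting into the previous display gives the desired identity $X_{\textbf{d}} = X_{M_\lambda}^{\,n} X_{M_0}$.

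It remains to verify rigidity of $M_0$. Since $\textbf{d}_0$ is a real root, one has $\langle \textbf{d}_0, \textbf{d}_0\rangle = (\textbf{d}_0, \textbf{d}_0) = 1$, and since $\textbf{d}_0$ is a Schur root, $\dim \End_{H}(M_0) = 1$. From the definition of the Euler form,
$$1 = \langle M_0, M_0\rangle = \dim \End_{H}(M_0) - \dim \Ext^1_{H}(M_0,M_0) = 1 - \dim \Ext^1_{H}(M_0,M_0),$$
so $\Ext^1_{H}(M_0,M_0) = 0$ and $M_0$ is rigid. I do not anticipate any real obstacle here; the whole argument is essentially an assembly of the preceding lemmas, the only mildly subtle point being to quote correctly that the canonical decomposition of a real non-Schur root on an affine quiver has precisely the shape $\delta^{\oplus n} \oplus \textbf{d}_0$ with $\textbf{d}_0$ a real Schur root, which is a Kac-theoretic input rather than something to be reproved.
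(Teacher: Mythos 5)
Your proof is correct and follows essentially the same route as the paper: applying Proposition \ref{prop:dcpcanonique} to the canonical decomposition $\textbf d = \delta^{\oplus n} \oplus \textbf d_0$, then invoking Lemma \ref{lem:imaginary} for $X_\delta^n = X_{M_\lambda}^n$ and Lemma \ref{lem:realSchur} for $X_{\textbf d_0} = X_{M_0}$. The explicit rigidity verification for $M_0$ via the Euler form is a harmless addition; the paper leaves this implicit since it is already carried out inside the proof of Lemma \ref{lem:realSchur}.
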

		\begin{proof}
			According to Lemma \ref{lem:realSchur}, $X_{\textbf d_0}=X_{M_0}$ where $M_0$ is the unique indecomposable module in $\rep(Q,\textbf d_0)$. According to Lemma \ref{lem:imaginary}, $X_{n \delta}=X_{\delta}^n=X_{M_\lambda}^n$ for any $\lambda \in \P^{\mathcal H}$. Proposition \ref{prop:dcpcanonique} thus implies that $X_{\textbf d}=X_{n\delta}X_{\textbf d_0}=X_{M_\lambda}^nX_{M_0}$.
		\end{proof}
		
		We can now give a complete description of the generic variables~:
		\begin{maprop}\label{prop:explicitbase}
			Let $Q$ be an affine quiver. Then,
			$$\mathcal G(Q)=\mathcal M(Q) \sqcup \ens{X_{\delta}^nX_E \ : n \geq 1, \ E \textrm{ is a regular rigid $H$-module}}.$$
		\end{maprop}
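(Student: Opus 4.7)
The plan is to analyze $X_{\textbf d}$ for arbitrary $\textbf d \in \Z^{Q_0}$ via Kac's canonical decomposition of $[\textbf d]_+$. Write
$$[\textbf d]_+ = \delta^{\oplus m} \oplus \bigoplus_{j} \textbf f_j$$
with $m \geq 0$ and each $\textbf f_j$ a real Schur root, and let $N_j$ denote the unique indecomposable rigid $H$-module of dimension $\textbf f_j$ provided by Lemma~\ref{lem:realSchur}. Combining Proposition~\ref{prop:dcpcanonique} with Lemma~\ref{lem:imaginary} then yields
$$X_{\textbf d} = X_\delta^{m}\Bigl(\prod_j X_{N_j}\Bigr)\prod_{d_i<0} u_i^{-d_i},$$
and the argument splits according to whether $m = 0$ or $m \geq 1$.

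The key step is to show that when $m \geq 1$, every $N_j$ must be a regular indecomposable lying in an exceptional tube with $\textbf f_j \lneqq \delta$. Since $\delta$ is the null root of the affine symmetric form, one has $\<\textbf e,\delta\>+\<\delta,\textbf e\>=0$, hence $\<\textbf e,\delta\>=-\d_{\textbf e}$ for every $\textbf e\in\Z^{Q_0}$. If $N_j$ were preprojective, the Hom/Ext-vanishings recalled at the start of this section would give $\Ext^1_H(N_j,M_\lambda) = \Hom_H(M_\lambda,N_j) = 0$, and the Euler form would then force
$$\dim \Ext^1_H(M_\lambda,N_j) = -\<\delta,\textbf f_j\> = -\d_{N_j} > 0,$$
contradicting the criterion of Proposition~\ref{prop:Kacdcp} for $\delta$ and $\textbf f_j$ to appear together in a canonical decomposition. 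The symmetric argument using $\Hom_H(I,R)=0$ and $\Ext^1_H(R,I)=0$ rules out preinjective $N_j$, and since $\textbf f_j$ is a real Schur root, $N_j$ is then a rigid regular indecomposable in an exceptional tube.

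In the case $m \geq 1$, sincerity of $\delta$ gives $([\textbf d]_+)_i \geq m \delta_i > 0$ for every $i$, so $P_{\textbf d}[1]=0$ and $X_{\textbf d} = X_\delta^m X_N$ with $N = \bigoplus_j N_j$ regular rigid, placing $X_{\textbf d}$ in the second subset on the right-hand side. In the case $m = 0$, the generic module $M = \bigoplus_j N_j$ of dimension $[\textbf d]_+$ is rigid in $H\modg$ (each $N_j$ is rigid, and Proposition~\ref{prop:Kacdcp} gives $\Ext^1_H(N_j,N_k)=0$ for $j\neq k$). To conclude that $M \oplus P_{\textbf d}[1]$ is rigid in $\CC_Q$, I would combine the 2-Calabi-Yau property with the identification $\Hom_{\CC_Q}(P_i,M) \simeq \Hom_H(P_i,M)$, so that for each $i$ with $d_i<0$, $\Ext^1_{\CC_Q}(M,P_i[1])$ has dimension $(\ddim M)_i = ([\textbf d]_+)_i = 0$. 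Hence $X_{\textbf d} = X_{M\oplus P_{\textbf d}[1]}$ is a cluster monomial by Proposition~\ref{prop:clustermonomials}.

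It remains to verify that the two subsets on the right-hand side are disjoint. An element $X_\delta^n X_E$ with $n\geq 1$ and $E$ regular rigid equals $X_{\textbf d}$ for $\textbf d = n\delta+\ddim E \in \N^{Q_0}$, whose canonical decomposition contains $n$ copies of $\delta$. By Remark~\ref{rmq:denXd} together with Proposition~\ref{prop:clustermonomials}, $X_{\textbf d}$ is a cluster monomial precisely when there exists a rigid $H$-module of dimension $\textbf d$; but every rigid $H$-module is a direct sum of rigid indecomposables whose dimension vectors are real Schur roots, so its dimension vector has a canonical decomposition free of $\delta$---contradicting $n\geq 1$. The main obstacle I anticipate is the preprojective/preinjective exclusion in the second paragraph; the rest assembles routinely from Proposition~\ref{prop:dcpcanonique}, Lemma~\ref{lem:realSchur}, Lemma~\ref{lem:imaginary} and the characterization of cluster monomials.
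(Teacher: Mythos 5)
Your overall strategy matches the paper's proof: decompose $[\textbf d]_+$ canonically, split on whether $\delta$ appears, and combine Proposition~\ref{prop:dcpcanonique}, Lemma~\ref{lem:realSchur} and Lemma~\ref{lem:imaginary}. The route you take to exclude preprojective/preinjective summands is genuinely different: you work in $H\modg$ with the radical property $\<\textbf e,\delta\>=-\<\delta,\textbf e\>$ and the standard Hom/Ext vanishings between the preprojective, regular and preinjective classes, whereas the paper passes to the cluster category and computes $\Ext^1_{\CC_Q}(\tau^{-s}P_v,M_\lambda)=\Hom_{\CC_Q}(P_v,M_\lambda)=\delta_v\geq 1$ directly. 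Both arguments are correct; the paper's is a one-line computation while yours makes the defect mechanism transparent. Your handling of the $m=0$ case (rigidity of $M\oplus P_{\textbf d}[1]$ in $\CC_Q$) and the disjointness argument are both fine.

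There is, however, a genuine gap: you only prove the inclusion $\mathcal G(Q)\subset\mathcal M(Q)\sqcup\ens{X_\delta^n X_E}$ together with disjointness, but never the reverse containment $\ens{X_\delta^n X_E : n\geq 1,\ E \textrm{ regular rigid}}\subset\mathcal G(Q)$. Your case analysis shows that each $X_{\textbf d}$ with $m\geq 1$ has the form $X_\delta^m X_N$ for the specific $N=\bigoplus_j N_j$ arising from the canonical decomposition, but it does not show that, conversely, every pair $(n,E)$ with $E$ regular rigid is realized by some $\textbf d$. To close this you must argue that $X_\delta^n X_E = X_{n\delta+\ddim E}$, which requires either checking that the canonical decomposition of $n\delta+\ddim E$ is $\delta^{\oplus n}\oplus\bigoplus_k \ddim E_k$ (via Proposition~\ref{prop:Kacdcp}, picking $M_{\lambda_1},\dots,M_{\lambda_n}$ in $n$ distinct homogeneous tubes together with the rigid summands of $E$), or applying Lemma~\ref{lem:multiplicativity} after observing that $\Ext^1_{\CC_Q}(\ddim E,n\delta)$ vanishes generically because there are no extensions between distinct tubes --- the latter is exactly what the paper does. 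Without one of these steps the proposition is not proved.
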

		\begin{proof}
			Fix $\textbf d \in \Z^{Q_0}$ and write $[\textbf d]_+=\delta^k \oplus \bigoplus_{i=1}^n e_i$ the canonical decomposition of $[\textbf d]_+$ with $n,k \geq 0$.
		
			If $k \neq 0$, then as $\delta$ is sincere, we have $\textbf d \in \N^{Q_0}$. According to Proposition \ref{prop:Kacdcp} and Lemma \ref{lem:imaginary}, there exists some $M$ in $\mathfrak M_{\textbf d} \cap U_{\textbf d}$ such that
			$$M=\bigoplus_{j=1}^k M_{\lambda_j} \oplus \bigoplus_{i=1}^n M_i$$
			where the $\lambda_j$ are pairwise distinct elements of $\P^{\mathcal H}$ and the $M_i$ are indecomposable rigid modules such that $\End_{H}(M_i) \simeq \k$ and $\Ext^1_{H}(M_i,M_l)=0$ if $i \neq l$.
			
			If one of the $M_i$ is preprojective, then there is some vertex $v \in Q_0$ and some integer $s \geq 0$ such that $M_i\simeq \tau^{-s}P_v$ and then $$\Ext^1_{\CC_Q}(M_i, M_1)=\Hom_{\CC_Q}(P_v, M_1) =\dim M_1(v)=\delta_v \geq 1$$ which is a contradiction. Similarly, none of the $M_i$ can be preinjective. It follows that each $M_i$ is regular and thus the $M_i$ are indecomposable regular modules in exceptional tubes such that $\ddim M_i \lneqq \delta$. In particular, each $M_i$ is a rigid $H$-module. Since $\Ext^1_{H}(M_k,M_l)=0$ for $k \neq l$, it follows that $\bigoplus_{i=1}^n M_i$ is rigid.
			
			If $k=0$, then $M \in \mathfrak M_{\textbf d} \cap U_{\textbf d}$ is rigid and thus $M \oplus P_{\textbf d}[1]$ is a rigid object in $\CC_Q$ so that $X_{\textbf d}$ is a cluster monomial of $\mathcal A(Q)$.
			
			This proves the inclusion 
			$$\mathcal G(Q) \subset \mathcal M(Q) \sqcup \ens{X_{\delta}^nX_E \ : n \geq 1, \ E \textrm{ is a regular rigid $H$-module}}.$$
			
			We now prove the reverse inclusion. According to Proposition \ref{prop:clustermonomials} it suffices to prove that 
			$$\ens{X_{\delta}^nX_E \ :n \geq 1 \ E \textrm{ is a regular rigid $H$-module}} \subset \mathcal G(Q).$$ 
			Fix some regular rigid module $E$ and some $n \geq 1$. Then, $X_{\ddim E}=X_E$ by Proposition \ref{prop:clustermonomials}. Fix now $\lambda_1, \ldots, \lambda_n$ pairwise distinct elements in $\P^{\mathcal H}$. According to Lemma \ref{lem:imaginary}, we have $X_{\delta}^n=X_{M_{\lambda_1}}\cdots X_{M_{\lambda_n}}$. Since there are no extensions between the tubes, we have
			$$\Ext^1_{\mathcal C_Q}(E, M_{\lambda_1} \oplus \cdots \oplus M_{\lambda_n})=0$$
			so that $\Ext^1_{\mathcal C_Q}(\ddim E, n \delta)$ vanishes generally. By Lemma \ref{lem:multiplicativity}, we thus have 
			$$X_{\ddim E + n \delta}=X_{\ddim E}X_{n \delta}$$
			and this proves that
			$$\mathcal G(Q)=\mathcal M(Q) \cup \ens{X_{\delta}^nX_E \ : n \geq 1, \ E \textrm{ is a regular rigid $H$-module}}.$$
			
			It remains to notice that the union is disjoint. Let $E$ be a regular rigid module and $n \geq 1$. Assume that there exists some rigid object $M$ in $\CC_Q$ such that $X_M=X_\delta^n X_E$. According to \cite[Theorem 3]{CK2}, it follows that $\ddim M=n \delta + \ddim E$. Since $M$ is rigid and $\delta$ is sincere, it follows that $M$ is a rigid $H$-module which is a contradiction since there exists no rigid module of dimension vector $n \delta + \ddim E$. 
		\end{proof}

		\begin{theorem}\label{theorem:imCC}
			Let $Q$ be an affine quiver. Then 
			$$\mathcal A(Q)=\im(X_?).$$
		\end{theorem}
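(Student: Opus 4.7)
The reverse inclusion $\mathcal A(Q) \subseteq \im(X_?)$ is already noted in the earlier remark, so the plan is to prove $\im(X_?) \subseteq \mathcal A(Q)$. Using the multiplicativity $X_{M \oplus N} = X_M X_N$, the ring $\im(X_?)$ is generated by $\ens{X_M \mid M \textrm{ indecomposable in } \CC_Q}$. For rigid indecomposable $M$ (including each $P_i[1]$) the character $X_M$ is a cluster variable by \cite[Theorem 4]{CK2} and hence lies in $\mathcal A(Q)$, so the nontrivial work concerns the indecomposable non-rigid $H$-modules. In affine type these are exactly the regular modules $M_\lambda^{(n)}$ in homogeneous tubes ($\lambda \in \P^{\mathcal H}$, $n \geq 1$) and the modules $E^{(n)}$ of quasi-length $n \geq p$ in an exceptional tube of rank $p$.

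For $M = M_\lambda^{(n)}$, Lemmas \ref{lem:XMlambda} and \ref{lem:Chebyshev} give $X_{M_\lambda^{(n)}} = S_n(X_\delta)$, so the homogeneous case reduces to showing $X_\delta \in \mathcal A(Q)$. The plan for this key step is to apply the Caldero-Keller cluster multiplication formula to a pair of rigid objects $T_1,T_2 \in \CC_Q$ with $\dim \Ext^1_{\CC_Q}(T_1,T_2)=1$ chosen so that one of the two middle terms of the non-split extension triangles is $M_\lambda$ for some $\lambda \in \P^{\mathcal H}$ and the other is a direct sum of rigid objects. The resulting identity $X_{T_1}X_{T_2} = X_{M_\lambda} + X_{B'}$ then exhibits $X_\delta=X_{M_\lambda}$ as an explicit polynomial in cluster variables, hence in $\mathcal A(Q)$. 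For the remaining case $M = E^{(n)}$ in an exceptional tube with $n \geq p$, I would argue by induction on $n$: the AR sequences inside the tube, combined with the Caldero-Keller formula, yield identities that express $X_{E^{(n)}}$ in terms of $X_{E^{(k)}}$ for $k<n$ and cluster variables. The base cases $n<p$ are rigid modules (cluster variables), and the critical case $n=p$, where $\ddim E^{(p)} = \delta$, is handled by the first step.

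The main obstacle is the step producing $X_\delta$ itself inside $\mathcal A(Q)$. The identity $X_PX_{M_\lambda}=X_L+X_B$ extracted from the proof of Lemma \ref{lem:XMlambda} only shows that $X_P \cdot X_\delta$ lies in $\mathcal A(Q)$, and $X_P$ is not invertible there. Overcoming this will require either exhibiting a single exchange pair $(T_1,T_2)$ whose cluster multiplication produces $X_\delta$ directly, with no superfluous cluster-variable factor, or combining several such relations — coming for example from different extending vertices or from mutations of $P$ — so that the unwanted factors $X_P$ cancel in a Bezout-type manner inside the subring of cluster variables. Once $X_\delta \in \mathcal A(Q)$ is secured, the remaining reductions amount to routine applications of Lemma \ref{lem:multiplicativity}, Lemma \ref{lem:Chebyshev}, and the tube induction sketched above.
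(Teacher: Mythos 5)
You correctly identify the overall structure (reduce to indecomposable objects, dispose of rigid ones, reduce homogeneous tubes to $X_\delta$ via Lemmas \ref{lem:XMlambda} and \ref{lem:Chebyshev}), and you also correctly identify the crux of the matter: the relation $X_P X_{M_\lambda} = X_L + X_B$ only shows $X_P\cdot X_\delta \in \mathcal A(Q)$, and $X_P$ is not a unit there. But your proposed ways around this obstacle remain at the level of hope rather than proof. You offer no construction of an exchange pair $(T_1,T_2)$ whose Caldero--Keller relation produces $X_{M_\lambda}$ directly with a rigid companion; such a pair may simply not exist (the middle terms of an exchange triangle between two rigid objects need not include a non-rigid regular module with the other term rigid), and in any case you haven't verified it. The "Bezout-type" cancellation is also speculative: $\mathcal A(Q)$ is not a PID, the elements $X_P$ over varying $P$ need not generate the unit ideal, and nothing in the proposal explains why the superfluous factors would cancel. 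So the central step of the theorem is left unproved.

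The paper closes this gap by a quite different device. Rather than trying to manufacture a relation in which $X_\delta$ appears without a parasitic factor, it fixes an arbitrary cluster $\textbf c$ with cluster-tilting object $T$, uses Palu's cluster character $X^T_?$ relative to $T$, and exploits that the canonical isomorphism $\phi\colon \mathcal A(Q,\textbf u)\to \mathcal A(Q_T,\textbf c)$ sends $X_M\mapsto X^T_M$ for rigid $M$. Applying $\phi$ to $X_PX_{M_\lambda}=X_L+X_B$ (all of $P,L,B$ rigid) and comparing with Palu's one-dimensional multiplication formula $X^T_PX^T_{M_\lambda}=X^T_L+X^T_B$, cancellation of $X^T_P$ in the domain $\Z[\textbf c^{\pm1}]$ gives $\phi(X_{M_\lambda})=X^T_{M_\lambda}\in\Z[\textbf c^{\pm1}]$. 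As $\textbf c$ was arbitrary, $X_{M_\lambda}$ lies in the upper cluster algebra $\overline{\mathcal A(Q)}$, which equals $\mathcal A(Q)$ for acyclic $Q$. This is the idea your proposal is missing. One further slip: you claim the critical case $E^{(p)}$ in an exceptional tube of rank $p$ is "handled by the first step" since $\ddim E^{(p)}=\delta$; but $X_{E^{(p)}}\neq X_\delta$ (by Lemma \ref{lem:imaginary}, $X_\delta = X_{M_\lambda}$ for homogeneous $\lambda$, not $X_{M_E}$). Fortunately this case needs no such reduction: for exceptional tubes all quasi-simples $\tau^kE$ are rigid, so the generalized Chebyshev identity $X_{E^{(n)}}=P_n(X_E,\dots,X_{\tau^{-n+1}E})$ already exhibits $X_{E^{(n)}}$ as a polynomial in cluster variables for every $n$.
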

		\begin{proof}
			Since cluster variables are characters of indecomposable rigid objects in $\CC_Q$, we know that
			$$\mathcal A(Q) \subset \Z[X_M |M \in \Ob(\CC_Q)]=\im(X_?).$$
			It is thus sufficient to prove the reverse inclusion.
			
			Since $X_MX_N=X_{M \oplus N}$ for any two objects $M,N$ in $\CC_Q$, it suffices to prove that $X_M \in \mathcal A(Q)$ for any indecomposable object $M$ in $\CC_Q$. If $M$ is not a regular $H$-module, $M$ is rigid and thus $X_M$ is a cluster variable. If $M$ is a regular $H$-module, it is contained in a tube $\mathcal T$. We can thus write $M \simeq E^{(n)}$ for some quasi-simple module $E$ in $\mathcal T$ and $n \geq 1$. It follows from \cite{Dupont:stabletubes} that 
			$$X_M=P_{n}(X_{E}, \ldots, X_{\tau^{-n+1}E})$$
			where $P_{n}$ is the $n$-th generalized Chebyshev polynomial of infinite rank introduced in \cite{Dupont:stabletubes}. If $\mathcal T$ is exceptional, then $\tau^k E$ is rigid for any $k \in \Z$. Thus, $X_{\tau^k E}$ is a cluster variable for any $k \in \Z$ and $X_M$ is a polynomial in cluster variables and thus is an element of $\mathcal A(Q)$. If $\mathcal T$ is homogeneous, $E \simeq M_\lambda$ for some $\lambda \in \P^{\mathcal H}$ and it is thus enough to prove that $X_{M_\lambda} \in \mathcal A(Q)$. 
			
			Fix thus $\lambda \in \P^{\mathcal H}$. With the notations of the proof of Lemma \ref{lem:XMlambda}, we have
			$$\Ext^1_{\CC_Q}(M_\lambda, P) \simeq \Ext^1_{H}(M_\lambda, P) \simeq \k$$
			and the corresponding triangles are
			$$P \fl L \fl M_\lambda \fl P[1],$$
			$$M_\lambda \fl B \fl P \xrightarrow{f} M_\lambda[1] \simeq M_\lambda$$
			where $B \simeq \ker f \oplus \coker f[-1]$. Thus we get $X_PX_{M_\lambda}=X_L+X_B$.
			
			Fix now any cluster $\textbf c$ in $\mathcal A(Q)$, denote by $T$ the corresponding cluster-tilting object in $\CC_Q$. We denote by $Q_T$ the ordinary quiver of the cluster-tilted algebra $\End_{\CC_Q}(T)^{\rm{op}}$ and by $X^T_?: \Ob(\CC_Q) \fl \Z[\textbf c^{\pm 1}]$ the cluster character in the sense of Palu introduced in \cite{Palu}. Denote by $\phi: \mathcal A(Q,\textbf u) \fl \mathcal A(Q_T,\textbf c)$ the canonical isomorphism of cluster algebras sending each $u_i$ to its Laurent expansion in the cluster $\textbf c$. It follows from \cite{Palu} that $\phi(X_M)=X^T_M$ for any rigid object $M$ in $\CC_Q$. In particular $\phi(X_L)=X^T_L$, $\phi(X_P)=X^T_P$ and since $\ker f$ is preprojective and $\coker f[-1]$ is preinjective, it follows that $\phi(X_B)=X^T_B$. We thus get
			\begin{align*}
				X^T_P\phi(X_{M_\lambda})
					& = \phi(X_P)\phi(X_{M_\lambda})\\
			 		& = \phi(X_PX_{M_\lambda})\\
					& = \phi(X_L+X_B)\\
			 		& = \phi(X_L)+\phi(X_B)\\
			 		& = X^T_L+X^T_B.
			\end{align*}
			Now the one-dimensional multiplication formula for cluster characters in \cite{Palu} implies that $X^T_PX^T_{M_\lambda}=X^T_L+X^T_B$. Since $\Z[\textbf c^{\pm 1}]$ is a domain, it follows that
			$$\phi(X_{M_\lambda})=X^T_{M_\lambda}.$$
			
			Thus, for any cluster $\textbf c$, we have $X_{M_\lambda}\in \Z[\textbf c^{\pm 1}]$ and thus 
			$$X_{M_\lambda} \in \bigcap_{\textbf c} \Z[\textbf c^{\pm 1}]$$ 
			where the intersection runs over all the clusters $\mathcal A(Q)$.
			
			In particular, if we denote by $\overline{\mathcal A(Q)}$ the upper cluster algebra associated to $Q$ (see \cite{cluster3}), we get $X_{M_\lambda} \in \overline{\mathcal A(Q)}$. Now since $Q$ is acyclic, the upper cluster algebra $\overline{\mathcal A(Q)}$ coincides with the cluster algebra $\mathcal A(Q)$ (see e.g. \cite{cdm03}) and thus, $X_{M_\lambda} \in \mathcal A(Q)$. This finishes the proof.  
		\end{proof}

		As an immediate corollary, we get~:
		\begin{corol}
		 	Let $Q$ be an affine quiver, then $\mathcal G(Q) \subset \mathcal A(Q)$.
		\end{corol}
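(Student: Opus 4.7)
The plan is to deduce the corollary directly from Theorem \ref{theorem:imCC} together with the definition of $\mathcal G(Q)$. Fix $\textbf d \in \Z^{Q_0}$; by definition
$$X_{\textbf d} = X_{[\textbf d]_+} \cdot \prod_{d_i<0} u_i^{-d_i}.$$
The first factor is the common value of $X_M$ on $U_{[\textbf d]_+} \subset \rep(Q, [\textbf d]_+)$, so $X_{[\textbf d]_+} = X_M$ for some $H$-module $M \in \Ob(\CC_Q)$, hence $X_{[\textbf d]_+} \in \im(X_?)$. Each factor $u_i = X_{P_i[1]}$ lies in $\im(X_?)$ as well, and since $\im(X_?)$ is a ring, the whole product $X_{\textbf d}$ belongs to $\im(X_?)$. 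Applying Theorem \ref{theorem:imCC}, $\im(X_?) = \mathcal A(Q)$, so $X_{\textbf d} \in \mathcal A(Q)$, and this holds for every $\textbf d \in \Z^{Q_0}$, giving $\mathcal G(Q) \subset \mathcal A(Q)$.

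Alternatively, one could invoke Proposition \ref{prop:explicitbase} to give the same conclusion slightly more concretely: the cluster monomials in $\mathcal M(Q)$ lie in $\mathcal A(Q)$ by construction, and each remaining generic variable has the form $X_\delta^n X_E = X_{M_\lambda}^n X_E$ for some $\lambda \in \P^{\mathcal H}$ and some regular rigid $H$-module $E$. Here $X_E$ is a cluster variable (since $E$ is rigid), and $X_{M_\lambda} \in \mathcal A(Q)$ by Theorem \ref{theorem:imCC}, so the product lies in $\mathcal A(Q)$. Either route works, but the first is shorter and needs no case analysis.

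There is essentially no obstacle: all the substantial work has already been done in Theorem \ref{theorem:imCC}, where the delicate step was showing $X_{M_\lambda} \in \mathcal A(Q)$ for quasi-simple modules in homogeneous tubes via the upper cluster algebra. The corollary is purely a bookkeeping consequence.
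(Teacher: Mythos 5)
Your proof is correct and matches the paper's intent: the paper simply labels this result an ``immediate corollary'' of Theorem \ref{theorem:imCC}, and you supply exactly the expected bookkeeping, namely that each $X_{\textbf d} = X_{[\textbf d]_+}\prod_{d_i<0} u_i^{-d_i} = X_{M \oplus P_{\textbf d}[1]}$ for some $M$, so $\mathcal G(Q) \subset \im(X_?) = \mathcal A(Q)$. The alternative route via Proposition \ref{prop:explicitbase} is also valid but unnecessary.
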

	\end{subsection}
\end{section}

\begin{section}{The Kronecker case}\label{section:basesKronecker}
	We now study in details the generic variables in the cluster algebra of affine type associated to the Kronecker quiver 
	$$\xymatrix{ K : 1 & \ar@<-2pt>[l]\ar@<+2pt>[l] 2}.$$
	We denote by $\delta=(1,1)$ the minimal positive imaginary root of $K$ and we denote by $\mathcal A(K)$ the coefficient-free cluster algebra with initial seed $(K, \textbf u)$ with $\textbf u=(u_1,u_2)$. We compare the set $\mathcal G(K)$ of generic variables in $\mathcal A(K)$ with $\Z$-linear bases in $\mathcal A(K)$ constructed by Sherman-Zelevinsky \cite{shermanz} and Caldero-Zelevinsky \cite{CZ}. This proves in particular that $\mathcal G(K)$ is a $\Z$-basis in $\mathcal A(K)$.

	\begin{subsection}{Normalized Chebyshev polynomials}\label{subsection:CZChebyshev}
		In this subsection, we recall briefly some results concerning the normalized Chebyschev polynomials of first and second kinds. For further details concerning these polynomials, especially in the context of cluster algebras, one can refer to \cite{Dupont:qChebyshev}.
		
		We recall that the \emph{normalized Chebyshev polynomials of the second kind} are the polynomials defined inductively by~:
		$$S_0(x)=1, \, S_1(x)=x \textrm{ and } S_{n+1}(x)=xS_n(x)-S_{n-1}(x)  \textrm{ for any }n \geq 1.$$
		It is known that for every $n \geq 0$, $S_n$ is the monic polynomial of degree $n$ characterized by 
		$$S_n(t+t^{-1})=\sum_{k=0}^n t^{n-2k}$$

		The \emph{normalized Chebyschev polynomials of the first kind} are the polynomials defined inductively by :
		$$F_0(x)=2, \, F_1(x)=x \textrm{ and } F_{n+1}(x)=xF_n(x)-F_{n-1}(x)  \textrm{ for any }n \geq 1.$$
		It is known that for every $n \geq 0$, $F_n$ is the monic polynomial of degree $n$ characterized by 
		$$F_n(t+t^{-1})=t^n+t^{-n}.$$
		
		Chebyshev polynomials of the first and second kinds are related by
		$$S_n(x)=\sum_{k=1}^{\left[\frac n2\right]+1}F_{n-2k}(x).$$
		In particular each second kind Chebyshev polynomial is a positive linear combination of first kind Chebyshev polynomials.
	\end{subsection}

	\begin{subsection}{Sherman-Zelevinsky basis}
		We recall some results and notations of \cite{shermanz}. 
		\begin{defi}
			An element $y \in \mathcal A(K)$ is called positive if, for every cluster $\textbf x=(x,x')$ of $\mathcal A(K)$, the coefficients in the expansion of $y$ as a Laurent polynomial in $x$ and $x'$ are positive.
		\end{defi}
		
		\begin{theorem}[\cite{shermanz}]\label{theorem:canonicalbasis}
			There exists an unique $\Z$-basis $\mathcal B(K)$ of $\mathcal A(K)$ such that the semi-ring of positive elements in $\mathcal A(K)$ consists precisely of $\Z_{\geq 0}$-linear combinations of elements of $\mathcal B(K)$. Moreover, this basis is given by
			$$\mathcal B(K)=\mathcal M(K) \sqcup \ens{F_n(z) |n \geq 1}$$
			where 
			$$z=\frac{1+u_1^2+u_2^2}{u_1u_2}.$$
		\end{theorem}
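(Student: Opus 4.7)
The plan is to split the statement into three claims: (i) $\mathcal B(K)$ is a $\Z$-linear basis of $\mathcal A(K)$; (ii) every element of $\mathcal B(K)$ is a positive element of $\mathcal A(K)$; (iii) every positive element of $\mathcal A(K)$ lies in the $\Z_{\geq 0}$-cone of $\mathcal B(K)$. The uniqueness of the basis is then automatic: (iii) implies that the elements of $\mathcal B(K)$ are exactly the \emph{atomic} positive elements (positive elements not expressible as a sum of two non-zero positive elements), which is a property intrinsic to the positive semi-ring of $\mathcal A(K)$.

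For the basis property (i), specialise Proposition~\ref{prop:explicitbase} to $Q=K$. Every regular component of the Auslander--Reiten quiver of $kK$ is a homogeneous tube, so there are no non-zero regular rigid modules and
$$\mathcal G(K) = \mathcal M(K) \sqcup \ens{X_\delta^n : n \geq 1}.$$
A direct Caldero--Chapoton computation on a quasi-simple homogeneous module yields $X_\delta = z$, and since each $F_n$ is monic of degree $n$, the transition from $(z^n)_{n\geq 0}$ to $(F_n(z))_{n\geq 0}$ is unitriangular, so $\mathcal B(K)$ and $\mathcal G(K)$ span the same $\Z$-submodule of $\mathcal A(K)$ up to the constant $1$. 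Genuine $\Z$-linear independence is established via denominator vectors: by Remark~\ref{rmq:denXd} and \cite[Theorem 3]{CK2}, $\den(z^n) = n\delta$ is imaginary whereas no cluster monomial has imaginary denominator, so the two parts of $\mathcal B(K)$ cannot be mixed, and independence within each part is straightforward.

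Claim (ii) breaks into two parts. Positivity of cluster monomials in every cluster follows from the Caldero--Chapoton formula: for every indecomposable Kronecker representation $M$ the Grassmannians $\Gr_{\textbf e}(M)$ admit explicit cell decompositions, so their Euler characteristics are non-negative, giving positivity in the initial cluster; Palu's generalised Caldero--Chapoton formula (invoked in the proof of Theorem~\ref{theorem:imCC}) then gives positivity in every other cluster. Positivity of $F_n(z)$ is more delicate, since $F_n$ has coefficients of both signs (e.g.\ $F_2(z)=z^2-2$), so the argument must exploit the specific Laurent structure of $z = (1+u_1^2+u_2^2)/(u_1u_2)$. The plan is to establish positivity in the initial cluster by induction on $n$, using the Chebyshev recursion $F_{n+1}(z) = z F_n(z) - F_{n-1}(z)$ together with the identity $zx_m = x_{m-1} + x_{m+1}$ satisfied by the doubly infinite sequence $(x_m)_{m\in\Z}$ of cluster variables of $\mathcal A(K)$ (a direct consequence of the exchange relation $x_{m-1}x_{m+1} = x_m^2 + 1$), which forces the required cancellation of negative terms; the dihedral symmetry of the cluster structure, fixing $z$ and cyclically permuting clusters, then transports positivity to every other cluster.

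For claim (iii) we argue by a leading-monomial descent. Endow $\Z[u_1^{\pm 1}, u_2^{\pm 1}]$ with a graded lexicographic order tailored to distinguish the Laurent expansions of the elements of $\mathcal B(K)$ and, given a positive $y \in \mathcal A(K)$, match its leading monomial with that of a unique $b \in \mathcal B(K)$; the corresponding coefficient of $y$, being a positive integer, provides a summand $c_b\, b$ such that $y - c_b\, b$ is again positive with strictly smaller leading monomial. Iterating terminates after finitely many steps in a $\Z_{\geq 0}$-decomposition of $y$ over $\mathcal B(K)$. The main obstacle is step~(ii): the positivity of $F_n(z)$ is genuinely invisible at the level of polynomials in $z$ and requires exploiting the specific Laurent arithmetic of $z$ in $\Z[u_1^{\pm 1}, u_2^{\pm 1}]$; steps~(i) and~(iii) are comparatively routine once the right combinatorial framework is in place.
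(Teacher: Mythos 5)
This theorem is stated in the paper with the attribution \cite{shermanz} and is not reproved there; the paper's own argument at this point is simply a citation of Sherman--Zelevinsky. So there is no internal proof to compare against, and what you have attempted is a reconstruction of the Sherman--Zelevinsky argument from scratch.

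Your proposal has a genuine gap in step (i), and it is not a small one. Specialising Proposition~\ref{prop:explicitbase} does legitimately identify $\mathcal G(K)=\mathcal M(K)\sqcup\{z^n : n\geq 1\}$, the unitriangular Chebyshev transition does show that $\mathcal B(K)$ and $\mathcal G(K)$ generate the same $\Z$-submodule, and the denominator-vector argument does give $\Z$-linear independence. But nothing in your outline shows that this submodule is all of $\mathcal A(K)$. Spanning is the hard part: one must show, for instance, that a product of two cluster variables from non-adjacent clusters lies in the proposed span, and nothing you have written addresses this. Worse, this gap cannot be repaired by appealing to the corollary in the paper that $\mathcal G(K)$ is a $\Z$-basis, because the paper deduces that corollary \emph{from} Theorem~\ref{theorem:canonicalbasis}; using it here would be circular. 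Any self-contained proof of the theorem must contain a spanning argument that does not pass through generic variables being a basis.

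Steps (ii) and (iii) are, as you concede, only programmes. For (iii) in particular, the crucial assertion is slipped in as if routine: you match leading monomials of a positive $y$ with some $b\in\mathcal B(K)$ and subtract, then claim "$y-c_b b$ is again positive." That claim is precisely what needs proving and is the technical heart of Sherman--Zelevinsky's argument (it amounts to showing that the positive cone is simplicial on $\mathcal B(K)$, which is equivalent to the theorem). A leading-term descent gives you an expansion $y=\sum c_b b$ with \emph{integer} $c_b$ once spanning is known; it does not automatically give non-negativity of the $c_b$. The inductive Chebyshev/positivity argument in (ii) is plausible in spirit (the identity $zx_m=x_{m-1}+x_{m+1}$ is correct), but as written it is a heuristic, not a proof, and the cancellation of the negative Chebyshev coefficients against the Laurent structure of $z$ needs to be carried out explicitly.
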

		It is proved in \cite{shermanz} that $\den(F_n(z))=n \delta$ for any $n \geq 1$ so that the basis $\mathcal B(K)$ is the disjoint union of the set of cluster monomials together with a family of variables whose denominator vectors are the positive imaginary roots of $Q$.
	\end{subsection}

	\begin{subsection}{Caldero-Zelevinsky basis}
		In \cite{CZ}, the authors computed another $\Z$-basis for the cluster algebra associated to the Kronecker quiver. This basis, constructed using the Caldero-Chapoton map, is given by
		$$\mathcal C(K)=\mathcal M(K) \sqcup \ens{X_{M_\lambda^{(n)}} |n \geq 1}$$
		where $M_\lambda$ is any quasi-simple regular module. Moreover, it was observed that $X_{M_\lambda}=z$.
		
		According to Lemma \ref{lem:imaginary}, we have $X_{\delta}=X_{M_\lambda}$ and Lemma \ref{lem:Chebyshev} implies that $X_{M_\lambda^{(n)}}=S_n(X_{M_\lambda})$ for any $n \geq 1$. Thus,
		$$\mathcal C(K)=\mathcal M(K) \sqcup \ens{S_n(z) |n \geq 1}$$

		According to \cite[Theorem 3]{CK2}, we have $\den(S_n(z))=\den(X_{M_\lambda^{(n)}})=\ddim(M_\lambda^{(n)})=n\delta$, so that the basis $\mathcal C(K)$ is also the disjoint union of the set of cluster monomials and of a set of variables whose denominator vectors correspond to the positive imaginary roots of $Q$.
	\end{subsection}

	\begin{subsection}{Generic variables for the Kronecker quiver}
		We now describe the set $\mathcal G(K)$ of generic variables in $\mathcal A(K)$. It follows from Proposition \ref{prop:explicitbase} that
		$$\mathcal G(K)=\mathcal M(K) \sqcup  \ens{z^n |n \geq 1}.$$
		
		For any $n \geq 1$, the denominator vector $\den(z^n)$ of $z^n$ is $n \den(z)=n\delta$. Thus, as for Caldero-Zelevinsky and Sherman-Zelevinsky bases, the set of generic variables is the disjoint union of the set of cluster monomials and a family of variables whose denominator vector are the positive imaginary roots of $K$.

		Using Theorem \ref{theorem:canonicalbasis} and the fact that the $F_n$ (or the $S_n)$ are monic, we get~:
		\begin{corol}
			The set $\mathcal G(K)$ of generic variables is a $\Z$-basis of the cluster algebra $\mathcal A(K)$.
		\end{corol}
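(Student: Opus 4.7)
The plan is to deduce the corollary from Theorem \ref{theorem:canonicalbasis} (or, equivalently, from Caldero--Zelevinsky's basis $\mathcal{C}(K)$) by exhibiting an explicit unitriangular change of basis between $\{F_n(z) : n \geq 1\}$ and $\{z^n : n \geq 1\}$ modulo the $\Z$-span of $\mathcal{M}(K)$. Since the cluster monomial $1$ lies in $\mathcal{M}(K)$, this will allow me to replace the ``imaginary part'' of the Sherman--Zelevinsky basis by powers of $z$ without affecting the cluster monomials.

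More precisely, I would first recall that by Proposition \ref{prop:explicitbase} (applied in the Kronecker case), together with the identifications $X_\delta = X_{M_\lambda} = z$ established in Section \ref{section:basesKronecker}, we have $\mathcal{G}(K) = \mathcal{M}(K) \sqcup \{z^n : n \geq 1\}$. Theorem \ref{theorem:canonicalbasis} provides that $\mathcal{B}(K) = \mathcal{M}(K) \sqcup \{F_n(z) : n \geq 1\}$ is a $\Z$-basis of $\mathcal{A}(K)$. In particular $\mathcal{G}(K) \subset \mathcal{A}(K)$ (which can also be deduced from the corollary right before the statement).

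Next, I would use the fact that for each $n \geq 1$, $F_n$ is a monic polynomial of degree $n$ in $\Z[x]$ to write
\[
F_n(z) = z^n + \sum_{k=1}^{n-1} a_{n,k}\, z^k + a_{n,0}
\]
with $a_{n,k} \in \Z$, where the constant term $a_{n,0} \cdot 1$ belongs to $\Z \cdot \mathcal{M}(K)$ since $1$ is a cluster monomial (the empty product). Ordering the Sherman--Zelevinsky basis as $\mathcal{M}(K)$ followed by $F_1(z), F_2(z), \ldots$ and the candidate basis $\mathcal{G}(K)$ similarly with $z, z^2, \ldots$, the transition matrix sending $\mathcal{G}(K)$ to $\mathcal{B}(K)$ is block lower unitriangular over $\Z$: the identity on $\mathcal{M}(K)$, and an infinite unitriangular matrix with integer entries on the imaginary part, whose strictly lower-triangular coefficients come from the $a_{n,k}$. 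This change of basis is thus invertible over $\Z$, so $\mathcal{G}(K)$ is a $\Z$-basis of $\mathcal{A}(K)$.

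I do not expect any serious obstacle here: the argument is purely a triangular change-of-basis verification, and the only subtlety is that the constant terms of the $F_n(z)$ must be absorbed into $\mathcal{M}(K)$ via the element $1$. As a sanity check, the same reasoning applies verbatim with the Caldero--Zelevinsky basis $\mathcal{C}(K) = \mathcal{M}(K) \sqcup \{S_n(z) : n \geq 1\}$ in place of $\mathcal{B}(K)$, since the $S_n$ are also monic of degree $n$ in $\Z[x]$; this gives an independent derivation of the corollary.
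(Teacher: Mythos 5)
Your proposal is correct and takes essentially the same approach as the paper: the paper's justification is the single line ``Using Theorem \ref{theorem:canonicalbasis} and the fact that the $F_n$ (or the $S_n$) are monic,'' which is exactly the unitriangular change-of-basis argument you spell out, with the constant terms absorbed into $\Z\cdot\mathcal M(K)$ via the cluster monomial $1$ being the one subtlety the paper leaves implicit.
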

	\end{subsection}

	\begin{subsection}{Base change between $\mathcal B(K)$ and $\mathcal G(K)$}
		\begin{defi}
			Let $\textbf a=\ens{a_n, n \geq 0}$ and $\textbf b=\ens{b_n, n \geq 0}$ be two bases of the $\Z$-module $\mathcal A(K)$. We say that there is a \emph{locally unipotent base change} from $\textbf a$ to $\textbf b$ if for every $n \in \Z$, the $\Z$-modules spanned by $\ens{a_k, 0 \leq k \leq n}$ and $\ens{b_k, 0 \leq k \leq n}$ coincide and if the base change matrix $P$ from $(a_k, 0 \leq k \leq n)$ to $(b_k, 0 \leq k \leq n)$ is unipotent in $M_n(\Z)$. If moreover $P$ has positive entries, then the base change is called \emph{positive}.
		\end{defi}
		
		\begin{maprop}\label{prop:BtoB'}
			There is a positive locally unipotent base change from $\mathcal B(K)$ to $\mathcal G(K)$.
		\end{maprop}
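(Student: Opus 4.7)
The two bases share the cluster monomial part $\mathcal M(K)$, so the problem reduces to relating the families $\{F_n(z):n\geq 1\}\subset \mathcal B(K)$ and $\{z^n:n\geq 1\}\subset \mathcal G(K)$. My plan is to derive an explicit non-negative integer expansion of each $z^n$ in the basis $\mathcal B(K)$, and then choose compatible enumerations of the two bases so that the resulting transition matrix is upper triangular unipotent with non-negative entries.

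To compute the expansion, I would use the substitution $z=t+t^{-1}$ together with the characterization $F_k(t+t^{-1})=t^k+t^{-k}$ recalled in Subsection~\ref{subsection:CZChebyshev}. The binomial theorem gives
$$z^n=(t+t^{-1})^n=\sum_{k=0}^n\binom{n}{k}t^{n-2k},$$
and grouping the $k$-th with the $(n-k)$-th summand (which carry the same coefficient) rewrites this as
$$z^n=\sum_{k=0}^{\lfloor(n-1)/2\rfloor}\binom{n}{k}F_{n-2k}(z)+\varepsilon_n\binom{n}{n/2}\cdot 1,$$
where $\varepsilon_n=1$ if $n$ is even and $0$ otherwise; the constant contribution for even $n$ is interpreted as a positive integer multiple of the cluster monomial $1\in \mathcal M(K)$, which does lie in $\mathcal B(K)$. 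Conversely, the recursion $F_{n+1}(z)=zF_n(z)-F_{n-1}(z)$ exhibits every $F_n(z)$ as an integer polynomial in $z$ of degree $n$, so $F_n(z)\in \mathrm{span}_{\mathbb Z}\{1,z,\ldots,z^n\}$.

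To convert these formulas into the statement of the proposition, I would pick any enumeration $c_1,c_2,\ldots$ of $\mathcal M(K)\setminus\{1\}$ and interleave the two families by setting $a_0=b_0=1$, $a_{2n-1}=F_n(z)$, $b_{2n-1}=z^n$ and $a_{2n}=b_{2n}=c_n$ for $n\geq 1$. The expansion above then reads $b_{2n-1}=a_{2n-1}+\sum_{k<n}\alpha_{n,k}\,a_{2k-1}+\beta_n\,a_0$ with non-negative integers $\alpha_{n,k},\beta_n$, whereas $b_0=a_0$ and $b_{2n}=a_{2n}$. The transition matrix $P_N$ from $(a_0,\ldots,a_N)$ to $(b_0,\ldots,b_N)$ is therefore upper triangular with $1$'s on the diagonal and non-negative integer entries elsewhere, and the truncated spans coincide by the converse expansion. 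This is precisely a positive locally unipotent base change.

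I expect the main (and rather mild) obstacle to be the bookkeeping around the central term of the binomial expansion when $n$ is even: the pair $t^0+t^0=2$ would naturally correspond to $F_0(z)=2$, but $F_0$ is not a basis vector of $\mathcal B(K)$, so this term must be absorbed as $2\cdot 1\in \mathbb Z\mathcal M(K)$ to stay inside $\mathcal B(K)$. Once this is handled, positivity, unipotence and locality all follow at once from the explicit formula, and no further representation-theoretic input is required.
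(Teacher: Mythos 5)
Your proof is correct, and it takes a genuinely different route from the paper's. The paper establishes unipotence by observing that each $F_k$ is a monic polynomial of degree $k$, and then establishes positivity indirectly: it invokes the Sherman--Zelevinsky positivity of $z=X_{M_\lambda}$, notes that positive elements form a semiring so $z^n$ is positive, and then cites Theorem~\ref{theorem:canonicalbasis} (the characterization of $\mathcal B(K)$ as the unique basis detecting positivity) to conclude that a positive element has non-negative coordinates in $\mathcal B(K)$. You instead obtain an \emph{explicit} expansion
$$z^n=\sum_{k=0}^{\lfloor (n-1)/2\rfloor}\binom{n}{k}F_{n-2k}(z)+\varepsilon_n\binom{n}{n/2}\cdot 1$$
by formally substituting $z=t+t^{-1}$ and applying the binomial theorem, reading positivity and unipotence off the binomial coefficients simultaneously. (This matches the example matrix $P$ in the paper: the columns of $P$ for $z^0,\dots,z^6$ are exactly these binomial coefficients.) Your argument is more elementary and self-contained: it does not need the deep uniqueness/positivity theorem for $\mathcal B(K)$, only the polynomial identity $F_k(t+t^{-1})=t^k+t^{-k}$. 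The paper's argument is shorter to state but rests on heavier machinery. Your handling of the middle term for even $n$ as a multiple of $1\in\mathcal M(K)$, rather than $\tfrac12\binom{n}{n/2}F_0$, is exactly the right fix, since $F_0=2$ is not a basis vector of $\mathcal B(K)$. The interleaved enumeration you propose correctly realizes the span condition and the unipotence of the truncated transition matrices.
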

		\begin{proof}
			As 
			$$\mathcal B(K)=\ens{\textrm{cluster monomials}} \sqcup \ens{F_n(z)|n \in \N}$$
			and
			$$\mathcal G(K)=\ens{\textrm{cluster monomials}} \sqcup \ens{z^n|n \in \N},$$
			it suffices to prove that there is a positive locally unipotent base change from $\ens{F_n(z), n \in \N}$ to $\ens{z^n, n \in \N}$. It is equivalent to prove that every $z^n$ can be written as a positive $\Z$-linear combination of the $F_k(z)$ for $0 \leq k \leq n$ where the coefficient of $F_n(z)$ is 1.
			
			Each $F_k(z)$ being a monic polynomial of degree $k$, it follows that $z^n$ can be written as a $\Z$-linear combination of the $F_k(z)$ for $0 \leq k \leq n$ and the coefficient of $F_n(z)$ is 1. Thus, there is a locally unipotent base change from $\mathcal B(K)$ to $\mathcal G(K)$.
			
			According to \cite{shermanz}, $z=X_{M_\lambda}$ is a positive element in $\mathcal A(K)$. Since positive elements form a semiring in $\mathcal A(K)$, each $X_{M_\lambda}^n$ is a positive element in $\mathcal A(K)$ and can thus be written as a positive $\Z$-linear combination of elements of $\mathcal B(K)$. The base change is thus positive and the proposition is proved. 
		\end{proof}
		
		\begin{monexmp}
			If we look at the base change matrix $P$ from the family $(z^n, 0 \leq n \leq 6) \subset \mathcal G(K)$ to the family $(F_n(z), 0 \leq n \leq 6)$ of the Sherman-Zelevinsky basis, we obtain
			$$P=\left[\begin{array}{ccccccc}
				1 & 0 & 2 & 0 & 6 & 0 & 20 \\
				0 & 1 & 0 & 3 & 0 & 10 & 0 \\
				0 & 0 & 1 & 0 & 4 & 0 & 15 \\
				0 & 0 & 0 & 1 & 0 & 5 & 0 \\
				0 & 0 & 0 & 0 & 1 & 0 & 6 \\
				0 & 0 & 0 & 0 & 0 & 1 & 0 \\
				0 & 0 & 0 & 0 & 0 & 0 & 1 
			\end{array}\right],\, P^{-1}=\left[\begin{array}{rrrrrrr}
				1 & 0 & -2 & 0 & 2 & 0 & -2 \\
				0 & 1 & 0 & -3 & 0 & 5 & 0 \\
				0 & 0 & 1 & 0 & -4 & 0 & 9 \\
				0 & 0 & 0 & 1 & 0 & -5 & 0 \\
				0 & 0 & 0 & 0 & 1 & 0 & -6 \\
				0 & 0 & 0 & 0 & 0 & 1 & 0 \\
				0 & 0 & 0 & 0 & 0 & 0 & 1 
			\end{array}\right].$$ 
		\end{monexmp}
	\end{subsection}

	\begin{subsection}{Base change between $\mathcal G(K)$ and $\mathcal C(K)$}
		For any $n \geq 0$, we write $F_n=F_n(z)$ and
		$$z^n=\sum_{i \leq n} \lambda_{i,n} F_i$$
		the expansion of $z^n$ in the $F_n$. It follows from Proposition \ref{prop:BtoB'} that that each $\lambda_{i,n}$ is positive.
		
		\begin{monlem}\label{lemcoeff}
			For any $n \geq 1$, we have :
			\begin{enumerate}
				\item $\lambda_{i,n}=0$ if $i \not \equiv n [2]$,
				\item $\lambda_{i,n} < \lambda_{i-2,n}$ for any $i \geq 2$ such that $i \equiv n[2]$.
			\end{enumerate} 
		\end{monlem}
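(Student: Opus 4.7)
The plan is to compute $\lambda_{i,n}$ in closed form by exploiting the characterizing identity $F_i(t+t^{-1}) = t^i + t^{-i}$ recalled in Subsection \ref{subsection:CZChebyshev}. Setting $z = t + t^{-1}$ in $\Z[t^{\pm 1}]$ and applying the binomial theorem gives
\begin{equation*}
z^n = (t + t^{-1})^n = \sum_{k=0}^n \binom{n}{k} t^{n-2k},
\end{equation*}
whose support consists only of monomials $t^{n-2k}$ with exponent of the same parity as $n$.

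I would then compare this expansion with $\sum_i \lambda_{i,n}(t^i + t^{-i})$, obtained by applying the substitution $z = t + t^{-1}$ to the expansion $z^n = \sum_{i\leq n} \lambda_{i,n} F_i$. Claim (1) is immediate: if $i \not\equiv n \pmod 2$, then the coefficient of $t^i$ in $z^n$ vanishes, which forces $\lambda_{i,n} = 0$. For the remaining coefficients, reading off the coefficient of $t^i$ for $i \geq 1$ with $i \equiv n \pmod 2$ yields the closed form
\begin{equation*}
\lambda_{i,n} = \binom{n}{(n-i)/2},
\end{equation*}
and the analogous identification (using the $F_0$ normalization implicit in the base-change matrix displayed above) gives $\lambda_{0,n} = \binom{n}{n/2}$ when $n$ is even.

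To establish claim (2), I set $j = (n-i)/2$ and observe that the hypotheses $i \geq 2$ and $i \equiv n \pmod 2$ force $0 \leq j \leq (n-2)/2$, so in particular $j + 1 \leq n/2$. Since the binomial coefficients $\binom{n}{k}$ are strictly increasing for $0 \leq k \leq \lfloor n/2 \rfloor$, this yields
\begin{equation*}
\lambda_{i,n} = \binom{n}{j} < \binom{n}{j+1} = \binom{n}{(n-(i-2))/2} = \lambda_{i-2,n}.
\end{equation*}

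The whole argument is a direct binomial coefficient manipulation; the only mildly subtle point is fixing the $F_0$ normalization so that the $i = 2$ case of claim (2) is consistent with the base-change matrix exhibited in the preceding example.
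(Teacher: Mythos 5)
Your proof is correct, and it takes a genuinely different route from the paper's. The paper proves the lemma by induction on $n$, using the multiplication rule $F_1F_i = F_{i-1}+F_{i+1}$ (for $i\geq 2$), $F_1^2 = 2 + F_2$, $F_1F_0 = F_1$ to push the recursion $z^{n+1}=F_1\cdot z^n$ through the $F$-expansion and then verifying the two claims directly at each step. You instead extract a closed form for the coefficients, $\lambda_{i,n}=\binom{n}{(n-i)/2}$ for $i\geq 1$ with $i\equiv n\pmod 2$ (and $\lambda_{0,n}=\binom{n}{n/2}$ for $n$ even), by specializing at $z=t+t^{-1}$ and comparing Laurent coefficients; both claims then fall out of the parity structure and strict unimodality of the row $\binom{n}{0},\dots,\binom{n}{\lfloor n/2\rfloor}$. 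Your version is a bit more informative, since the explicit binomials are stronger than the inequalities; the paper's induction is more self-contained and matches the spirit of the recursive definitions in Subsection \ref{subsection:CZChebyshev}. One point you flag correctly and deserves emphasis: the paper's stated convention $F_0(x)=2$ is inconsistent with both the base-change matrix and with claim (2) itself (with $F_0=2$ one gets $\lambda_{0,2}=\lambda_{2,2}=1$, violating the strict inequality at $i=2$); the proof and the examples in the paper silently use $F_0=1$, and your argument correctly adopts that normalization, handling the $F_0$ term separately from the identity $F_i(t+t^{-1})=t^i+t^{-i}$, which only holds for $i\geq 1$ under that convention.
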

		\begin{proof}
			We prove it by induction on $n$. If $n=1$, then $z^n=F_1$ and thus $\lambda_{1,1}=1$ and $\lambda_{i,1}=0$ for all $i \neq 1$, the above assertions are true. We now prove the induction step. We have 
			$$z^{n+1}=z.z^n=F_1.\left(\sum_{i \leq n} \lambda_{i,n} F_i \right)$$
			Now according to \cite[prop. 5.4 (1)]{shermanz}, 
			$$F_1F_i=\left\{ \begin{array}{rl}
				F_{i-1}+F_{i+1} & \textrm{ if } n >1~; \\
				2 + F_2 & \textrm{ if } i =1~;\\
				F_1 & \textrm{ if } i=0.
			\end{array}\right.$$
			It follows that
			\begin{align*}
				z^{n+1}
					&= \lambda_{0,n}F_1F_0 + \lambda_{1,n} F_1F_1+\sum_{2 \leq i \leq n}\lambda_{i,n}F_1F_i \\
					&= \lambda_{0,n}F_1 + \lambda_{1,n} (2+F_2)+\sum_{2 \leq i \leq n}\lambda_{i,n}(F_{i-1}+F_{i+1}) \\
					&= 2\lambda_{1,n}F_0 + (\lambda_{0,n}+\lambda_{2,n})F_1 + \sum_{i \geq 2}(\lambda_{i-1,n}+\lambda_{i+1,n}) F_i
			\end{align*}
			A direct check proves that the induction step is satisfied. 
		\end{proof}

		\begin{maprop}\label{prop:B''toB'}
			There is a positive locally unipotent base change from $\mathcal C(K)$ to $\mathcal G(K)$.
		\end{maprop}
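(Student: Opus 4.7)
The plan is to follow the pattern of the proof of Proposition~\ref{prop:BtoB'}, replacing the Chebyshev polynomials of the first kind by those of the second kind. Since both $\mathcal C(K)$ and $\mathcal G(K)$ contain $\mathcal M(K)$, it suffices to exhibit a positive locally unipotent base change between the families $\{S_n(z)\,:\,n \geq 0\}$ and $\{z^n\,:\,n \geq 0\}$. The local unipotency is immediate, because each $S_n$ is a monic polynomial of degree $n$ (Subsection~\ref{subsection:CZChebyshev}), so the transition matrix from $(S_k(z))_{0 \leq k \leq n}$ to $(z^k)_{0 \leq k \leq n}$ is lower-triangular with $1$'s on the diagonal.

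It thus remains to establish positivity. Writing $z^n = \sum_{k \leq n} \mu_{k,n} S_k(z)$, I would prove by induction on $n$ that each $\mu_{k,n}$ belongs to $\Z_{\geq 0}$. The base cases $n=0$ and $n=1$ are given by $z^0 = S_0(z)$ and $z^1 = S_1(z)$. For the induction step, the defining recursion of the normalized Chebyshev polynomials of the second kind gives $zS_0(z) = S_1(z)$ and $zS_k(z) = S_{k+1}(z) + S_{k-1}(z)$ for $k \geq 1$, so that
$$z^{n+1} = z \cdot z^n = \mu_{0,n} S_1(z) + \sum_{k \geq 1} \mu_{k,n}\bigl(S_{k+1}(z) + S_{k-1}(z)\bigr).$$
Reading off the coefficients, each $\mu_{k,n+1}$ appears as a sum of non-negative integers from the previous step, which closes the induction.

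No genuine obstacle arises here. The computation is in fact slightly cleaner than that of Lemma~\ref{lemcoeff}, because the three-term recursion for the second-kind polynomials has no exceptional behaviour at $k=1$ (in contrast with the identity $F_1F_1 = 2 + F_2$, which forced a special case in the first-kind argument). As a secondary remark, one cannot simply compose Proposition~\ref{prop:BtoB'} with the expansion $S_n = \sum_{k} F_{n-2k}$ recalled in Subsection~\ref{subsection:CZChebyshev} to transfer positivity from $\mathcal B(K)$, since inverting a lower-triangular matrix with positive entries does not preserve positivity of coefficients in general; the direct induction above is therefore the natural route.
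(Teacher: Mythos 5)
Your argument is correct but takes a genuinely different route from the paper. The paper first expands $z^n = \sum_i \lambda_{i,n}\,F_i(z)$ (the combination from Proposition~\ref{prop:BtoB'}), then substitutes $F_i = S_i - S_{i-2}$ to get $z^n = \sum_i (\lambda_{i,n} - \lambda_{i+2,n})\,S_i(z)$, and finally invokes the monotonicity statement of Lemma~\ref{lemcoeff} to conclude that the differences $\lambda_{i,n} - \lambda_{i+2,n}$ are non-negative. So the paper \emph{does} transfer positivity from the $F$-expansion, just not by inverting the positive matrix $S_n = \sum_k F_{n-2k}$ directly --- it uses the other direction $F_n = S_n - S_{n-2}$, and Lemma~\ref{lemcoeff} is exactly the control needed for the telescoping differences to stay positive. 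Your proof instead runs the three-term recursion $zS_0 = S_1$, $zS_k = S_{k+1} + S_{k-1}$ ($k\geq 1$) directly on the $S$-coefficients $\mu_{k,n}$, which makes positivity immediate by induction and makes Lemma~\ref{lemcoeff} unnecessary for this proposition. Your version is a little shorter and, as you point out, avoids the exceptional case $F_1F_1 = 2 + F_2$ that appears in the first-kind recursion; the paper's version has the advantage of reusing Lemma~\ref{lemcoeff}, which was already established for the comparison with the Sherman--Zelevinsky basis, so it gets the $\mathcal C(K)$ comparison essentially for free.
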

		\begin{proof}
			As 
			$$\mathcal G(K)=\ens{\textrm{cluster monomials}}\sqcup\ens{z^n, n \geq 0}$$
			and
			$$\mathcal C(K)=\ens{\textrm{cluster monomials}}\sqcup\ens{S_n(z), n \geq 0},$$
			it suffices to prove that for any $n \geq 0$, the coefficients of the expansion of $z^n$ as a linear combination of the $S_n(z)$ are positive.
			
			We set $S_n=S_n(z)$ and we recall that $F_n=S_n-S_{n-2}$. We write
			$$z^n=\sum_{i \leq n} \lambda_{i,n} F_n$$
			the expansion of $z^n$ as a linear combination of the $F_n$. Then
			$$z^n = \sum_{i} \lambda_{i,n} (S_n-S_{n-2}) = \sum_{i} (\lambda_{i,n}-\lambda_{i+2,n}) S_n.$$
			According to Lemma \ref{lemcoeff}, the difference $(\lambda_{i,n}-\lambda_{i+2,n})$ is positive. Then, $z^n$ can be written as a positive linear combination of the $S_n$. 
		\end{proof}
		
		\begin{monexmp}
			The base change matrix $P$ from $(S_n(z), 0 \leq n \leq 6) \subset \mathcal C(K)$ to $(z^n, 0 \leq n \leq 6) \subset \mathcal G(K)$ is 
			$$P=\left[\begin{array}{ccccccc}
				1 & 0 & 1 & 0 & 2 & 0 & 5 \\
				0 & 1 & 0 & 2 & 0 & 5 & 0 \\
				0 & 0 & 1 & 0 & 3 & 0 & 9 \\
				0 & 0 & 0 & 1 & 0 & 4 & 0 \\
				0 & 0 & 0 & 0 & 1 & 0 & 5 \\
				0 & 0 & 0 & 0 & 0 & 1 & 0 \\
				0 & 0 & 0 & 0 & 0 & 0 & 1 \\
			\end{array}\right], \, P^{-1} = \left[\begin{array}{rrrrrrr}
				1 & 0 & -1 & 0 & 1 & 0 & -1 \\
				0 & 1 & 0 & -2 & 0 & 3 & 0 \\
				0 & 0 & 1 & 0 & -3 & 0 & 6 \\
				0 & 0 & 0 & 1 & 0 & -4 & 0 \\
				0 & 0 & 0 & 0 & 1 & 0 & -5 \\
				0 & 0 & 0 & 0 & 0 & 1 & 0 \\
				0 & 0 & 0 & 0 & 0 & 0 & 1 
			\end{array}\right].$$
		\end{monexmp}
	\end{subsection}	
\end{section}

\begin{section}{Conjecture : Generic bases in acyclic cluster algebras}\label{section:conjectures}
		As mentioned in the introduction, the first aim in introducing generic variables is to provide a general construction of $\Z$-linear bases in acyclic cluster algebras. We conjecture~:
		\begin{maconj}
			Let $Q$ be an acyclic quiver. Then $\mathcal G(Q)$ is a $\Z$-basis in $\mathcal A(Q)$.
		\end{maconj}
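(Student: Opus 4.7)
The plan is to split the conjecture into three separate assertions: (i) the inclusion $\mathcal G(Q) \subseteq \mathcal A(Q)$, (ii) $\Z$-linear independence of $\mathcal G(Q)$ in $\Z[\textbf u^{\pm 1}]$, and (iii) $\mathcal G(Q)$ spans $\mathcal A(Q)$ over $\Z$. All three are established in the present paper for Dynkin and affine quivers, so the genuinely open case is wild $Q$.

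For (i), I would extend Theorem \ref{theorem:imCC} beyond the affine case. That proof combines Palu's cluster character, a one-dimensional multiplication formula coming from a specific $\Ext^1$-class, and the equality $\mathcal A(Q) = \overline{\mathcal A(Q)}$ for acyclic $Q$; the last two ingredients remain available in the wild case. What is still needed is, for each indecomposable non-rigid object $M$ in $\CC_Q$, a multiplicative identity expressing $X_M$ in terms of characters of rigid objects. I would try to produce such identities from approximation triangles with rigid ends, then invoke Palu's formula inside each cluster torus $\Z[\textbf c^{\pm 1}]$ and use the domain property exactly as in the affine case to place $X_M$ in every cluster torus, hence in $\overline{\mathcal A(Q)} = \mathcal A(Q)$.

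For (ii), the denominator vector map of Remark \ref{rmq:denXd} gives a bijection $\textbf d \mapsto X_{\textbf d}$ between $\Z^{Q_0}$ and $\mathcal G(Q)$. Given a finite relation $\sum_j c_j X_{\textbf d^{(j)}} = 0$ with distinct $\textbf d^{(j)}$, I would pick a componentwise maximal $\textbf d^{(j_0)}$ among those indices with $c_{j_0} \neq 0$, clear denominators by multiplying through by $\textbf u^{\textbf d^\star}$ for $\textbf d^\star = \max_j \textbf d^{(j)}$ taken componentwise, and then exploit the fact that the polynomial numerator of $X_{\textbf d^{(j_0)}}$ is not divisible by any $u_i$ to locate a Laurent monomial in its expansion to which no other $X_{\textbf d^{(j)}}$ contributes. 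This should force $c_{j_0} = 0$, and induction then yields linear independence.

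Step (iii) is the main obstacle. In the affine case one exploits the tube structure and the fact that beyond cluster monomials, $\mathcal G(Q)$ only accounts for powers of $X_\delta$ scaled by characters of regular rigid modules; neither feature persists in the wild case, where canonical decompositions may involve infinite families of imaginary Schur roots with $\langle \textbf e, \textbf e \rangle < 0$. Two strategies look promising. First, combine Lemma \ref{lem:multiplicativity} and Proposition \ref{prop:dcpcanonique} to reduce the spanning problem to generic variables attached to Schur roots, then attempt a direct expansion of arbitrary monomials in a fixed cluster into this reduced family by induction on some natural complexity. Second, compare $\mathcal G(Q)$ with the Geiss-Leclerc-Schr\"oer basis of $\mathcal A(Q)$ built from the dual semicanonical basis of $\Lambda$-modules: a correspondence between generic $kQ$-modules and generic $\Lambda$-modules compatible with Caldero-Chapoton characters would transport their spanning property onto $\mathcal G(Q)$. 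The hardest technical hurdle throughout is the wild behaviour of representation varieties and the absence of a manageable description of imaginary Schur roots, which renders even multiplicativity estimates for characters delicate.
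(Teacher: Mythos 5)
The statement you are addressing is stated in the paper only as a conjecture; the paper offers no proof of it, settling the question only for the Kronecker quiver (via comparison with the Sherman--Zelevinsky and Caldero--Zelevinsky bases) and deferring the general affine case to a forthcoming article. So there is no paper proof to compare against, and your proposal must stand on its own. It is a reasonable research outline, but each of the three parts has a genuine gap, and one factual claim is wrong: the paper does \emph{not} establish all three assertions for affine quivers. Only the inclusion $\mathcal G(Q)\subset\mathcal A(Q)$ (Theorem~\ref{theorem:imCC}) is proved there; spanning and a $\Z$-basis statement are proved only for $K$, the Kronecker quiver.

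On (i), your plan to extend Theorem~\ref{theorem:imCC} founders on the key feature of the affine proof: every indecomposable non-rigid object $M$ there lies in a tube and admits a rigid object $P$ with $\dim\Ext^1_{\CC_Q}(M,P)=1$, so that Palu's one-dimensional multiplication formula applies. For a wild quiver the non-rigid indecomposables form a vastly larger class, and there is no reason for such a one-dimensional extension space against a rigid object to exist. Nothing in your sketch produces the required ``approximation triangles with rigid ends,'' nor controls the middle terms $B$ so that $\phi(X_B)=X^T_B$ as in the affine case. This is the concrete obstruction you must overcome, and it is not addressed.

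On (ii), the maximal-denominator argument is not yet a proof. First, $\leq$ on $\Z^{Q_0}$ is only a partial order, so among the $\textbf d^{(j)}$ with $c_j\neq 0$ there may be several incomparable maximal elements, and the induction is not set up. More seriously, $\den(X_{\textbf d})=\textbf d$ only guarantees that for each $i$ \emph{some} monomial of the numerator of $X_{\textbf d}$ is not divisible by $u_i$; these witnesses need not coincide, so there is no canonical ``corner'' monomial $\prod_i u_i^{-d_i}$ appearing in $X_{\textbf d}$. Without an auxiliary total order in which the leading monomial of $X_{\textbf d}$ determines $\textbf d$ (a $g$-vector-type argument), or without positivity of the Laurent expansions, the step ``locate a Laurent monomial to which no other $X_{\textbf d^{(j)}}$ contributes'' is unjustified.

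On (iii), you identify this as the main obstacle and leave it open; the two strategies you name are programmatic. The reduction to Schur roots via Proposition~\ref{prop:dcpcanonique} and Lemma~\ref{lem:multiplicativity} is sound, but the crucial spanning step is entirely missing, and the comparison with the Geiss--Leclerc--Schr\"oer dual semicanonical construction would itself require a nontrivial compatibility between generic $kQ$-modules and generic $\Lambda$-modules that is not supplied. In short: the outline is a sensible roadmap, but none of its three legs is carried to a proof, and it reproduces, rather than resolves, the open status of the paper's conjecture.
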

		Based on results presented in \cite{mathese,DXX:basesv3}, we will provide in a forthcoming article a positive answer to this conjecture when $Q$ is any quiver of affine type. Jan Schr\"oer announced at the Conference \emph{Homological and Geometric Methods in Algebra} in August 2009 that, together with Christof Geiss and Bernard Leclerc, they proved that $\mathcal G(Q)$ is a $\C$-linear basis of $\mathcal A(Q) \otimes_{\Z} \C$ for any acyclic quiver $Q$.
\end{section} 

\section*{Acknowledgements}
	This work is part of my PhD thesis. I would like to thank my supervisor Philippe Caldero for all his comments, corrections and advices. I would also like to thank Bernhard Keller, Robert Marsh, Idun Reiten, Kenji Iohara and Jan Schr\"oer.


\newcommand{\etalchar}[1]{$^{#1}$}

\end{document}